\theoremstyle{plain}
 \newtheorem{thm}{Theorem}[section]
 \newtheorem{cor}{Corollary}[section]
 \newtheorem{lem}{Lemma}[section]
\theoremstyle{definition}
 \newtheorem{dfn}{Definition}[section]
\theoremstyle{remark}
 \numberwithin{equation}{section}
\renewcommand{\leq}{\leqslant}
\renewcommand{\geq}{\geqslant}
\title{Cohomology Group of $K(\mathbb{Z},4)$ and $K(\mathbb{Z},5)$}
\author{ K.Salehi\\
Massey university}
\begin{document}

\begin{abstract}
We are familiar with properties and structure of topological spaces. One of the powerful tools, which help us to figure out the structure of topological spaces is (Leray- Serre) spectral sequence. Although Eilenberg-Maclane space plays important roles in topology, and respectively geometry.  Actually finding cohomology groups of this space can be useful for classifying spaces, and also homotopy groups structure of these groups. 
This paper discusses  how to compute cohomology groups of Eilenberg-Maclane spaces $K(\mathbb{Z}, 4)$ and $K(\mathbb{Z}, 5)$ (cohomology degree less than $11$).  Furthermore we give the method to find cohomology groups of $K(\mathbb{Z}, n)$.  Some proofs are given to the basic facts about cohomology group of $K(\mathbb{Z}, 5)$ and $K(\mathbb{Z},4).$\\\\
Keywords: Homology groups, Cohomology groups, Eilenberg-Maclane spaces, Leray-Serre spectral sequences.
\end{abstract} 
\maketitle
\begin{flushleft}
\section{Introduction}
Algebraic topology is one of the important branches of mathematics that examines the structure and properties of topological space. One of the tools that playing important roles is serre spectral sequences. Serre spectral sequences are powerful theoretical and computational tool with numerous applications to algebraic topology. Indeed, a main application is the computation of various cohomology groups of topological spaces.
We know that cohomology groups of $K (\mathbb{Z},1)$, $K (\mathbb{Z},2)$ and  $K (\mathbb{Z},3)$ are computed (see\cite{a}). In this paper first we introduce the notion of a spectral sequence (sections $2$, $3$), next in section $4$, we will introduce some lemma and then prove them by using spectral sequence, to compute the cohomology groups of $K(\mathbb{Z}, 4)$. In section $5$, we calculate  $K(\mathbb{Z},5)$ with coefficient group $\mathbb{Z}$. Finding cohomology groups of this space with coefficient $\mathbb{Z}$ are complicated, because integer group $\mathbb{Z}$ contains torsion subgroups in terms of $n\mathbb{Z}$.
\section{Preliminaries}
Here we recall without the proof some significant definitions and theorems from algebraic topology and geometry that will be used later in this note. (see \cite{a})
\begin{dfn}\label{11}
A fibration is a map $P: E \longrightarrow B$ which having the homotopy lifting property with respect to all path-connected spaces. (see \cite{c})\\ 
\end{dfn}
\begin{dfn}\label{12}
A space X having just one nontrivial homotopy group, is called an Eilenberg- Maclane space $K(G,n)$. (see \cite{a})
\end{dfn}
\begin{dfn}\label{13}
A topological space is called simply- connected if it is path- connected and has trivial fundamental group. (see \cite{b})
\end{dfn}
\begin{dfn}\label{14}
A space with base-point $x_{0}$ is said to be $n$- connected if $$\pi_{i} (X,x_{0}) = 0 \hspace*{.5cm} \forall \hspace*{.1cm} i\leq n.$$
\end{dfn}
\begin{thm}\label{1} (Hurewicz)
If space $X$ is $(n-1)$- connected for $n\geq 2$, then $H _{i} (X)=0$ for $i<n$ and $\pi_{n} (X) \simeq H_{n} (X)$. If a pair $(X,A)$ is $(n-1)$- connected, $n\geq 2$, with A simply- connected and nonempty, then $H_{i} (X,A) \simeq \pi_{i} (X,A) =0$ for $i<n$ and $\pi_{n} (X,A) \simeq H_{n} (X,A)$. (see \cite{c})
\end{thm}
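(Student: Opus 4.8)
The plan is to prove the theorem using the Hurewicz homomorphism $h\colon \pi_n(X,x_0)\to H_n(X)$, $[f]\mapsto f_*(\iota_n)$, where $\iota_n$ generates $H_n(S^n)\cong\mathbb{Z}$, together with its relative analogue $h\colon \pi_n(X,A,x_0)\to H_n(X,A)$ defined the same way on maps of pairs $(D^n,S^{n-1})\to(X,A)$. I would first settle the absolute statement and then reduce the relative statement to it. For the absolute case I would begin by invoking CW approximation to replace $X$ by a weakly equivalent CW complex $Y$, which alters neither side. Since $X$ is $(n-1)$-connected, a standard cell-attaching argument lets me take $Y$ to have a single $0$-cell and no cells in dimensions $1,\dots,n-1$. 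The cellular chain complex of $Y$ then vanishes in degrees $1,\dots,n-1$, so $\tilde H_i(X)=0$ for $i<n$ is immediate.

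For the isomorphism $\pi_n\cong H_n$ I would pass to the skeleton $Y^{(n+1)}$, which has the same $\pi_n$ and $H_n$ as $Y$ because cells of dimension $\ge n+2$ affect neither. Here $Y^{(n)}=\bigvee_\alpha S^n_\alpha$ and $Y^{(n+1)}=Y^{(n)}\cup_{\varphi_\beta}e^{n+1}_\beta$. The crux is then two skeletal computations: (a) for $n\ge2$ the map $h\colon\pi_n(\bigvee_\alpha S^n_\alpha)\to H_n(\bigvee_\alpha S^n_\alpha)=\bigoplus_\alpha\mathbb{Z}$ is an isomorphism; and (b) attaching the $(n+1)$-cells along the $\varphi_\beta$ replaces $\pi_n(Y^{(n)})$ by its quotient by the subgroup generated by the classes $[\varphi_\beta]$ (no normal closure is needed, since $n\ge2$ makes $\pi_n$ abelian and $\pi_1$ trivial), while it replaces $H_n(Y^{(n)})$ by its quotient by the image of the cellular boundary, and these two subgroups correspond under $h$. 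Granting (a) and (b), naturality of $h$ forces the induced map $\pi_n(Y^{(n+1)})\to H_n(Y^{(n+1)})$ on quotients to be an isomorphism, which finishes the absolute case.

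For the relative statement, taking $A=\{x_0\}$ already recovers the absolute theorem through the homotopy long exact sequence of the pair. For the genuine relative case with $A$ simply connected and nonempty I would pass to the quotient $X/A$: for a CW pair with $A$ simply connected and $(X,A)$ being $(n-1)$-connected, the quotient map induces isomorphisms $\pi_i(X,A)\cong\pi_i(X/A)$ in the range $i\le n$, while $H_i(X,A)\cong\tilde H_i(X/A)$ holds for the good pair. Consequently $X/A$ is itself $(n-1)$-connected, so applying the absolute case to $X/A$ and using naturality of $h$ under the collapse map yields $H_i(X,A)=0$ for $i<n$ and $\pi_n(X,A)\cong H_n(X,A)$.

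The hard part will be the two skeletal facts (a) and (b) behind the absolute case, together with the comparison $\pi_i(X,A)\cong\pi_i(X/A)$ used in the relative reduction. Fact (a) rests on computing the bottom homotopy group of a wedge of $n$-spheres, where the hypothesis $n\ge2$ guarantees simple connectivity and hence an abelian group splitting as a direct sum; fact (b) and the relative comparison are, in essence, the content of a homotopy-excision (Blakers–Massey) argument controlling how homotopy groups behave under attaching and collapsing cells. It is precisely this geometric input that carries the weight of the proof, whereas the connectivity-driven vanishing ranges and the diagram-chasing with the Hurewicz map are formal.
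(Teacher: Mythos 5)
The paper does not actually prove this theorem: it appears in the Preliminaries section, which the author explicitly introduces with ``we recall without the proof some significant definitions and theorems,'' and the statement is simply cited to Mosher--Tangora. So there is no in-paper argument to compare yours against; your proposal must be judged on its own. As an outline it is the standard modern proof (essentially Hatcher's): CW approximation, a model with one $0$-cell and no cells in dimensions $1,\dots,n-1$ giving the vanishing range for free, the skeletal computation on $Y^{(n)}=\bigvee_\alpha S^n_\alpha$ and $Y^{(n+1)}$ for the isomorphism $\pi_n\cong H_n$, and the reduction of the relative case to the absolute one via $\pi_i(X,A)\cong\pi_i(X/A)$ for $i\le n$ (the connectivity arithmetic checks out: the pair is $(n-1)$-connected and $A$ is $1$-connected, so the quotient map is an isomorphism on $\pi_i$ for $i\le (n-1)+1=n$). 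Your bookkeeping is also right on the two points where sketches often slip: cells of dimension $\ge n+2$ affect neither $\pi_n$ nor $H_n$, and simple connectivity of the wedge for $n\ge 2$ removes the $\pi_1$-action and normal-closure issues in fact (b). The one caveat is that, as you yourself note, facts (a) and (b) and the quotient comparison all rest on homotopy excision (Blakers--Massey), which your proposal names but does not prove; as stated, your text is a correct reduction of Hurewicz to homotopy excision rather than a self-contained proof. A finished write-up should either prove excision or cite it explicitly as input, and in the relative case should also invoke CW approximation of \emph{pairs} (not just spaces) before collapsing $A$. Modulo those deferred inputs, your plan is sound and, if anything, more informative than the paper, which gives no argument at all.
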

\begin{thm}\label{2} (Universal Coefficients for Cohomology)
If a chain complex $C$ of free abelian groups has homology groups $H_{i} (C)$, then the cohomology groups $H^{i} (C,G)$ of co-chain complex $Hom(C_{i} , G)$ are determined by split exact sequences 
\begin{equation}
0 \longrightarrow Ext(H_{n-1} (X),G) \longrightarrow H^{n} (C,G) \longrightarrow Hom(H_{n} (C) , G) \longrightarrow 0
\end{equation}
In practice, the Ext term either vanishes or is computable. (see \cite{c})
\end{thm}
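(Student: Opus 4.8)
The plan is to prove this by the classical homological argument, reducing everything to short exact sequences of the free abelian groups of cycles and boundaries and then invoking the defining property of $Ext$. First I would set $Z_n=\ker\partial_n$ and $B_n=\mathrm{im}\,\partial_{n+1}$; both are subgroups of the free abelian group $C_n$ and hence are themselves free abelian. The boundary map yields the short exact sequence
\[
0 \longrightarrow Z_n \longrightarrow C_n \stackrel{\partial_n}{\longrightarrow} B_{n-1} \longrightarrow 0,
\]
and since $B_{n-1}$ is free this sequence splits, giving a retraction $r_n\colon C_n\to Z_n$ that I will reuse at the end.

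Next I would regard $Z=(Z_n)$ and $B'=(B_{n-1})$ as cochain complexes with zero differential. Applying $Hom(-,G)$ degreewise to the split sequence above (exact here precisely because it splits) produces a short exact sequence of cochain complexes
\[
0 \longrightarrow Hom(B_{n-1},G) \longrightarrow Hom(C_n,G) \longrightarrow Hom(Z_n,G) \longrightarrow 0 .
\]
Its long exact sequence in cohomology is the heart of the argument. Because the two outer complexes carry zero differential, their cohomology in degree $n$ is simply $Hom(B_{n-1},G)$ and $Hom(Z_n,G)$, so the long exact sequence takes the form
\[
\cdots \to Hom(Z_{n-1},G)\stackrel{\iota^{*}}{\to} Hom(B_{n-1},G)\to H^{n}(C,G)\to Hom(Z_n,G)\stackrel{\iota^{*}}{\to} Hom(B_n,G)\to\cdots ,
\]
where I must verify that the connecting homomorphism is, up to sign, the map $\iota^{*}$ induced by the inclusion $\iota\colon B_m\hookrightarrow Z_m$. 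From this I extract the short exact sequence
\[
0 \longrightarrow \operatorname{coker}\iota^{*}_{n-1} \longrightarrow H^{n}(C,G) \longrightarrow \ker\iota^{*}_{n} \longrightarrow 0 .
\]

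To finish I would identify the two outer groups. The inclusion of boundaries in cycles gives $0\to B_m\stackrel{\iota}{\to} Z_m\to H_m(C)\to 0$, a free resolution of $H_m(C)$; applying $Hom(-,G)$ and using that $Z_m,B_m$ are free (so that their $Ext$ vanishes) yields a four term exact sequence that identifies $\ker\iota^{*}_{m}=Hom(H_m(C),G)$ and $\operatorname{coker}\iota^{*}_{m}=Ext(H_m(C),G)$ for every $m$. Substituting these into the previous sequence gives exactly
\[
0 \longrightarrow Ext(H_{n-1}(C),G) \longrightarrow H^{n}(C,G) \longrightarrow Hom(H_n(C),G) \longrightarrow 0 .
\]
For the splitting I would use the retraction $r_n$: given $\psi\in Hom(Z_n,G)$ that vanishes on $B_n$, the composite $\psi\circ r_n$ is a cocycle in $Hom(C_n,G)$ whose cohomology class maps to the class of $\psi$, thereby providing a (non-canonical) section of the surjection onto $Hom(H_n(C),G)$.

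I expect the main obstacle to be the careful identification of the connecting homomorphism of the long exact sequence with the inclusion-induced map $\iota^{*}$, together with keeping the degree shift straight so that the $Ext$ term lands on $H_{n-1}(C)$ rather than $H_n(C)$. The splitting also needs a clean statement, since the constructed section depends on the chosen retraction $r_n$ and is therefore not natural in $C$.
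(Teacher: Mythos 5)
The paper states this theorem purely as background, explicitly ``without the proof,'' citing \cite{c}, so there is no internal argument to compare yours against. Your proposal is correct and is precisely the standard textbook proof (as in Hatcher's treatment of universal coefficients): the splitting of $0\to Z_n\to C_n\to B_{n-1}\to 0$ via freeness of $B_{n-1}\subset C_{n-1}$, the dualized short exact sequence of cochain complexes with its long exact sequence, the identification of the connecting homomorphism with $\iota^{*}$, the extraction of $\ker\iota^{*}_{n}=Hom(H_{n}(C),G)$ and $\operatorname{coker}\iota^{*}_{n-1}=Ext(H_{n-1}(C),G)$ from the free resolution $0\to B_{m}\to Z_{m}\to H_{m}(C)\to 0$, and the splitting via the retraction $r_{n}$ (with the correct caveat that it is non-natural) all check out, including the degree shift placing $Ext$ on $H_{n-1}$.
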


\begin{thm} (Homology with Coefficient)
If the homology groups $H_{n}$ and $H_{n-1}$ of a chain complex $C$ of free abelian groups are finitely generated, with torsion subgroups $T_{n} \subset H_{n}$ and $T_{n-1} \subset H_{n-1}$ , then $H^{n} (C, \mathbb{Z}) \simeq (\frac{H_{n}}{T_{n}}) \oplus T_{n-1}$\\
In practice, the Tor term either vanishes or is easily computable. (see \cite{a})
\end{thm}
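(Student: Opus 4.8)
The plan is to derive this statement as an immediate consequence of the Universal Coefficient Theorem (Theorem \ref{2}), specialized to the coefficient group $G = \mathbb{Z}$. First I would apply Theorem \ref{2} with $G = \mathbb{Z}$ to obtain the split short exact sequence
\[
0 \longrightarrow Ext(H_{n-1}, \mathbb{Z}) \longrightarrow H^{n}(C, \mathbb{Z}) \longrightarrow Hom(H_{n}, \mathbb{Z}) \longrightarrow 0,
\]
and, using that it splits, conclude $H^{n}(C, \mathbb{Z}) \simeq Hom(H_{n}, \mathbb{Z}) \oplus Ext(H_{n-1}, \mathbb{Z})$. The whole argument then reduces to identifying these two summands explicitly in terms of the free and torsion parts of the homology.

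Since $H_{n}$ and $H_{n-1}$ are finitely generated abelian groups, I would invoke the structure theorem to write $H_{n} \simeq (H_{n}/T_{n}) \oplus T_{n}$, where $H_{n}/T_{n}$ is free of finite rank and $T_{n}$ is a finite direct sum of cyclic groups $\mathbb{Z}/m\mathbb{Z}$, and similarly for $H_{n-1}$. Because both $Hom(-, \mathbb{Z})$ and $Ext(-, \mathbb{Z})$ carry finite direct sums in the first variable to direct sums, it suffices to evaluate them on a single free generator and on each cyclic torsion summand separately.

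For the Hom term I would use $Hom(\mathbb{Z}, \mathbb{Z}) \simeq \mathbb{Z}$ together with $Hom(\mathbb{Z}/m\mathbb{Z}, \mathbb{Z}) = 0$, the latter because any homomorphism from a torsion group into the torsion-free group $\mathbb{Z}$ must be zero; hence only the free part survives, giving $Hom(H_{n}, \mathbb{Z}) \simeq H_{n}/T_{n}$. For the Ext term, the computation I expect to require the most care is showing $Ext(\mathbb{Z}/m\mathbb{Z}, \mathbb{Z}) \simeq \mathbb{Z}/m\mathbb{Z}$ while $Ext(\mathbb{Z}, \mathbb{Z}) = 0$. I would derive both from the free resolution
\[
0 \longrightarrow \mathbb{Z} \overset{m}{\longrightarrow} \mathbb{Z} \longrightarrow \mathbb{Z}/m\mathbb{Z} \longrightarrow 0.
\]
Applying the contravariant functor $Hom(-, \mathbb{Z})$ turns multiplication by $m$ into the map $\mathbb{Z} \overset{m}{\longrightarrow} \mathbb{Z}$, whose cokernel $\mathbb{Z}/m\mathbb{Z}$ is exactly $Ext(\mathbb{Z}/m\mathbb{Z}, \mathbb{Z})$, while freeness of $\mathbb{Z}$ forces $Ext(\mathbb{Z}, \mathbb{Z}) = 0$. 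It follows that $Ext(H_{n-1}, \mathbb{Z}) \simeq T_{n-1}$, and substituting both identifications into the split sequence yields $H^{n}(C, \mathbb{Z}) \simeq (H_{n}/T_{n}) \oplus T_{n-1}$, as required. The genuine content of the argument is thus concentrated in the single Ext computation above; every other step is a formal consequence of the splitting of the Universal Coefficient sequence and the additivity of $Hom$ and $Ext$.
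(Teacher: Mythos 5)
Your proposal is correct, and there is in fact nothing in the paper to compare it against: the statement appears in the Preliminaries, which the paper explicitly recalls \emph{without proof}, citing \cite{a}. Your argument is precisely the standard derivation given in that reference: specialize the Universal Coefficient sequence of Theorem \ref{2} to $G=\mathbb{Z}$, use the splitting to get $H^{n}(C,\mathbb{Z})\simeq Hom(H_{n},\mathbb{Z})\oplus Ext(H_{n-1},\mathbb{Z})$, and evaluate both functors on the free and cyclic summands supplied by the structure theorem --- with the one genuinely computational step, $Ext(\mathbb{Z}/m\mathbb{Z},\mathbb{Z})\simeq\mathbb{Z}/m\mathbb{Z}$ via the resolution $0\to\mathbb{Z}\xrightarrow{m}\mathbb{Z}\to\mathbb{Z}/m\mathbb{Z}\to 0$, carried out correctly, and with finite generation correctly invoked where it is needed (finiteness of the rank of $H_{n}/T_{n}$ and additivity of $Hom$ and $Ext$ over the resulting finite direct sums).
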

\begin{thm}
If $C$ is a chain complex of free abelian groups, then there are natural short exact sequences 
$$
0 \longrightarrow H_{n} (C) \otimes G \longrightarrow H_{n} (C,G) \longrightarrow Tor(H_{n-1} (C), G) \longrightarrow 0 .$$ (see \cite{b})

\end{thm}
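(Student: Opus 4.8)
The plan is to exploit the freeness of $C$ to split it into its cycle and boundary subgroups, and then to read the desired sequence off a long exact sequence in homology. First I would write $Z_n=\ker\partial_n$ and $B_n=\operatorname{im}\partial_{n+1}$. Since both are subgroups of the free abelian group $C_n$, they are themselves free. The boundary operator gives, for each $n$, the short exact sequence $0\to Z_n\to C_n\to B_{n-1}\to 0$, and because $B_{n-1}$ is free this sequence splits. Viewing $Z_\bullet$ and $B_\bullet$ as chain complexes equipped with the zero differential, these sequences assemble into a short exact sequence of chain complexes.

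Next I would apply the functor $-\otimes G$. Because each $B_{n-1}$ is free, tensoring preserves the exactness above, producing a short exact sequence of chain complexes $0\to Z_\bullet\otimes G\to C_\bullet\otimes G\to B_{\bullet-1}\otimes G\to 0$. I would then invoke the associated long exact sequence in homology. Since $Z_\bullet$ and $B_\bullet$ carry the zero differential, their homology in degree $n$ is simply $Z_n\otimes G$ and $B_n\otimes G$, and the only nontrivial point is to identify the connecting homomorphism with the map $i_{n-1}\otimes 1\colon B_{n-1}\otimes G\to Z_{n-1}\otimes G$ induced by the inclusion $i_{n-1}\colon B_{n-1}\hookrightarrow Z_{n-1}$. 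This identification is the step that demands the most care.

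Granting this, the long exact sequence breaks into short exact sequences
$$0\to\operatorname{coker}(i_n\otimes 1)\to H_n(C;G)\to\ker(i_{n-1}\otimes 1)\to 0.$$
The decisive observation is that $0\to B_n\xrightarrow{i_n}Z_n\to H_n(C)\to 0$ is a free resolution of $H_n(C)$; tensoring this resolution with $G$ and computing homology shows that $\operatorname{coker}(i_n\otimes 1)\cong H_n(C)\otimes G$ and $\ker(i_n\otimes 1)\cong Tor(H_n(C),G)$. Substituting these two identifications (the latter in degree $n-1$) yields precisely the asserted sequence $0\to H_n(C)\otimes G\to H_n(C;G)\to Tor(H_{n-1}(C),G)\to 0$.

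It remains to treat naturality and the splitting. Naturality follows because every arrow used in the construction is induced by the given data and commutes with chain maps and with homomorphisms $G\to G'$. The splitting is obtained from a choice of splitting of $0\to Z_n\to C_n\to B_{n-1}\to 0$, which after tensoring furnishes a retraction realizing $H_n(C;G)$ as the direct sum; as usual this splitting is not natural in $C$. The main obstacle I anticipate is the verification that the connecting homomorphism is exactly the inclusion-induced map, together with careful bookkeeping of the degree shift so that the $Tor$ term correctly appears in homological degree $n-1$.
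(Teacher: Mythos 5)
Your proposal is correct: the splitting of $C_n$ into cycles and boundaries, the tensored short exact sequence of complexes with zero differentials on $Z_\bullet$ and $B_\bullet$, the identification of the connecting homomorphism with $i_{n-1}\otimes 1$, and the use of the free resolution $0\to B_n\to Z_n\to H_n(C)\to 0$ to produce the $\otimes$ and $Tor$ terms is precisely the standard proof of the universal coefficient theorem for homology. The paper itself states this theorem without proof, deferring to the cited reference, and your argument is essentially the one given there, so there is nothing to reconcile beyond noting that your flagged point of care --- checking that the connecting map is the inclusion-induced map, with the degree shift placing $Tor(H_{n-1}(C),G)$ correctly --- is indeed the only nontrivial verification.
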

\section{Spectral Sequences} 
A spectral sequence is a tool to compute the cohomology of chain complex. It arises from a filtration of the dual chain complex and it provides an alternative way to determine the cohomology of the dual chain complex. A spectral sequence consists of a sequence of intermediate dual chain complexes called pages $ E_{0},E_{1},E_{2},E_{3},...$, with differentials denoted by $d_{0},d_{1},d_{2},d_{3},...$, such that $E_{r+1} $ is the cohomology of $E_{r}$. The various pages have accessible cohomology groups which form a finer and finer approximation of the cohomology $H$ we wish to find out. This limit process is convergence, in which case the limit page is denoted by $E_{\infty}$. Even if there is convergence to $E_{\infty}$, reconstruction is still needed to obtain $ H$ from $E_{\infty}$. Although the differentials $d_{r}$'s cannot always be all computed, the existence of the spectral sequence often reveals deep facts about the dual chain complex. The spectral sequence and its internal mechanisms can still lead to very useful and deep applications. (see \cite{d})
\begin{thm} (Spectral Sequences for Cohomology)
for a fibration $$F \longrightarrow X \longrightarrow B $$
with $B$ path-connected and $\pi_{1} (B)$ acting trivially on $H^{*} (F,G)$, there is a spectral sequences $\lbrace E^{p,q} _{r} , d_{r} \rbrace$ with:
\begin{enumerate}
\item
$d_{r} : E^{p,q} _{r} \longrightarrow E^{p+r,q-r+1} _{r}  \hspace*{.1cm}and\hspace*{.1cm}  E^{p,q} _{r+1} = \frac{Ker  d_{r}}{Img  d_{r}} \hspace*{.1cm}   at \hspace*{.1cm}   E^{p,q} _{r}.$\\
\item
Stable terms $E^{p,n-p} _{\infty}$ is isomorphic to the successive quotients $ \frac{F^{n} _{p}}{F^{n} _{p+1}}$ in a filtration\\
$0 \subset F^{n} _{n} \subset F^{n} _{n-1} \subset... \subset F^{n} _{0} = H^{n} (X,G)  \hspace*{.1cm} of \hspace*{.1cm} H^{n} (X,G)$\\
\item 
$E^{p,q} _{2} \approx H^{p} (B,H^{q} (F,G)).$ (see\cite{c})
\end{enumerate}
\end{thm}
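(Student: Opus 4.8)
The plan is to construct the spectral sequence from a filtration of the total space $X$ and then to identify its first two pages with the claimed objects. First I would reduce to the case in which the base $B$ is a CW complex: by cellular approximation one may replace $B$ up to weak homotopy equivalence by a CW complex and pull the fibration back along this equivalence, without altering any of the cohomology groups that appear in the statement. With $B$ a CW complex, let $B^{p}$ denote its $p$-skeleton and set $X_{p}=P^{-1}(B^{p})$. This produces an increasing filtration $\cdots \subset X_{p-1}\subset X_{p}\subset \cdots$ of $X$ with $X_{p}=\varnothing$ for $p<0$ and $\bigcup_{p}X_{p}=X$, which is the engine driving the whole construction.

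Next I would build the exact couple. Applying singular cohomology with coefficients in $G$ to the pairs $(X_{p},X_{p-1})$ yields, for each $p$, a long exact sequence relating $H^{*}(X_{p};G)$, $H^{*}(X_{p-1};G)$ and $H^{*}(X_{p},X_{p-1};G)$. Assembling these into a single bigraded exact couple and setting $E_{1}^{p,q}=H^{p+q}(X_{p},X_{p-1};G)$, the purely algebraic machinery of exact couples produces a spectral sequence together with differentials $d_{r}$ of bidegree $(r,1-r)$ and the relation $E_{r+1}=\ker d_{r}/\operatorname{im} d_{r}$; this is precisely assertion (1). Because the filtration is bounded in each total degree ($X_{p}$ is empty for $p<0$ and stabilizes once $p$ exceeds a fixed degree $n$), the standard convergence argument shows that the terms $E_{\infty}^{p,n-p}$ are the successive quotients of the induced filtration of $H^{n}(X;G)$, which is assertion (2).

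The substantive work lies in identifying the $E_{2}$ page, assertion (3). Here I would first compute $E_{1}^{p,q}=H^{p+q}(X_{p},X_{p-1};G)$. The crucial input is that over each $p$-cell $e^{p}$ of $B$ the restricted fibration has contractible base, hence is fiber-homotopy equivalent to the trivial fibration $e^{p}\times F$. Using excision together with the relative K\"unneth theorem for the product $(D^{p},\partial D^{p})\times F$, each cell contributes a summand isomorphic to $H^{q}(F;G)$, and these assemble to identify $E_{1}^{*,q}$ with the cellular cochain complex of $B$ with coefficients in the local system $\mathcal{H}^{q}(F)$ given by $e^{p}\mapsto H^{q}(F;G)$. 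The hypothesis that $\pi_{1}(B)$ acts trivially on $H^{*}(F;G)$ forces this system to be the constant system $H^{q}(F;G)$. I would then verify that $d_{1}$ agrees with the cellular coboundary operator, which reduces to comparing the connecting homomorphisms of the pairs $(X_{p},X_{p-1})$ with the cellular boundary maps of $B$; taking cohomology of the resulting complex gives $E_{2}^{p,q}\cong H^{p}(B;H^{q}(F;G))$.

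The main obstacle is this last identification, and I would expect three delicate points. First, promoting an abstract Serre fibration to something product-like over cells relies on the homotopy lifting property rather than genuine local triviality, so the fiber-homotopy equivalence over a contractible cell must be argued rather than assumed. Second, the K\"unneth step must be carried out with the coefficient group $G$ in such a way that no spurious $\operatorname{Tor}$ terms intrude. Third, and most importantly, one must check that the spectral-sequence differential $d_{1}$ really is the cellular coboundary, so that $E_{2}$ is the genuine cohomology of $B$ and not merely the cohomology of some abstract cochain complex. Once these are settled, the trivial-monodromy hypothesis lets the coefficient system collapse to constant coefficients and the theorem follows.
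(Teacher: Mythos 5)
This theorem is one of the results the paper explicitly recalls \emph{without} proof (see the opening of Section 2: the preliminaries are quoted from the literature, here from \cite{c}; see also \cite{d}), so there is no internal proof to compare against; your proposal must be judged against the standard construction, and it matches it. Your outline is the classical Serre argument as presented in \cite{c} and in Hatcher's spectral sequence notes \cite{b}: replace $B$ by a weakly equivalent CW complex and pull the fibration back, filter $X$ by $X_{p}=P^{-1}(B^{p})$, form the exact couple from the long exact sequences of the pairs $(X_{p},X_{p-1})$ to get assertion (1), and identify $E_{1}^{*,q}$ with the cellular cochain complex of $B$ with coefficients in the local system $\mathcal{H}^{q}(F)$, which the trivial $\pi_{1}(B)$-action collapses to constant coefficients, giving (3). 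You also correctly isolate the three genuinely delicate points (fiber-homotopy triviality over a cell via the lifting property, absence of $\operatorname{Tor}$ terms in the K\"unneth step because $H^{*}(D^{p},\partial D^{p})$ is free, and $d_{1}=$ cellular coboundary).

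One point in your convergence step (2) is stated too loosely and would fail as written: for an infinite-dimensional base --- which is exactly the situation in this paper, since $K(\mathbb{Z},4)$ and $K(\mathbb{Z},5)$ admit no finite-dimensional CW models --- the spaces $X_{p}$ do \emph{not} stabilize. The correct argument is that $(B,B^{p})$ is $p$-connected, hence so is $(X,X_{p})$ by comparing the long exact sequences of relative homotopy groups of the fibration, so $H^{n}(X,X_{p};G)=0$ for $n\leq p$ and the restriction $H^{n}(X;G)\to H^{n}(X_{p};G)$ is an isomorphism for $p\geq n$; this, not stabilization of the filtration, is what makes the first-quadrant spectral sequence converge and identifies $E_{\infty}^{p,n-p}$ with the graded quotients of the induced filtration on $H^{n}(X;G)$. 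With that repair your sketch is the standard proof in outline, with the usual caveat that the two comparisons you flag (the cell-by-cell trivialization and $d_{1}$ versus the cellular coboundary) carry most of the technical weight in a complete write-up.
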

\section{Cohomology  Groups of $K(\mathbb{Z},4)$ Via Fibration}
In algebraic topology, for any path connected space $(X,x_{0})$, there is a path fibration
$$\Omega X \longrightarrow PX \longrightarrow X$$
where $X$ is the base space, $PX$ is the total contractible space and $\Omega X$ is fiber over the base space $(X,x_{0})$, which called loop space.
Now consider base space $X=K(\mathbb{Z},4)$, so there is a fibration 
$$K(\mathbb{Z} ,3) = \Omega K(\mathbb{Z},4) \longrightarrow  PK(\mathbb{Z},4) \longrightarrow K(\mathbb{Z},4).$$
Since we know that the topological space $K(\mathbb{Z} ,4)$ is ($3$ - connected), so by definition (\ref{14}) and theorem (\ref{1}),
$$H_{1}(K(\mathbb{Z},4),\mathbb{Z}) \simeq H_{2}(K(\mathbb{Z},4),\mathbb{Z}) \simeq H_{3}(K(\mathbb{Z},4),\mathbb{Z}) \simeq 0 .$$
Now, with the use of theorem (\ref{2}), it is easy to show that
$$H^{1}(K(\mathbb{Z},4),\mathbb{Z}) \simeq H^{2}(K(\mathbb{Z},4),\mathbb{Z}) \simeq H^{3}(K(\mathbb{Z},4),\mathbb{Z}) \simeq 0.$$
On the other hand by Hurewicz theorem we see that 
$$\pi_{4}(K(\mathbb{Z},4)) \simeq H_{4}(K(\mathbb{Z},4),\mathbb{Z}) \simeq \mathbb{Z}.$$
Then from universal coefficient theorem we have
\begin{equation*}
H^{4}(K(\mathbb{Z},4),\mathbb{Z}) \simeq Ext(H_{3} (K(\mathbb{Z},4),\mathbb{Z})) \oplus Hom(H_{4}(K(\mathbb{Z},4),\mathbb{Z})) 
\simeq \mathbb{Z}.
\end{equation*}
By setting  
$$E^{p,q} _{2} := H^{p}(K(\mathbb{Z},4);H^{q} (K(\mathbb{Z},3))$$
and using theorems (\ref{1}), (\ref{2}) and cohomology groups of topological space $K(\mathbb{Z},3)$ (see\cite{a}), we obtain the following results:
$$E^{0,0} _{2} =H^{0} (K(\mathbb{Z},4) ,H^{0} (K(\mathbb{Z},3),\mathbb{Z}))= H^{0} (K(\mathbb{Z},4) ,\mathbb{Z}) \simeq \mathbb{Z}$$
$$E^{0,1} _{2} =H^{0} (K(\mathbb{Z},4) ,H^{1} (K(\mathbb{Z},3),\mathbb{Z}))= H^{0} (K(\mathbb{Z},4) ,0) \simeq 0$$
$$E^{0,2} _{2} =H^{0} (K(\mathbb{Z},4) ,H^{2} (K(\mathbb{Z},3),\mathbb{Z}))= H^{0} (K(\mathbb{Z},4) ,0) \simeq 0$$
$$E^{0,3} _{2} =H^{0} (K(\mathbb{Z},4) ,H^{3} (K(\mathbb{Z},3),\mathbb{Z}))= H^{0} (K(\mathbb{Z},4) ,\mathbb{Z}) \simeq \mathbb{Z}$$
$$E^{0,4} _{2} =H^{0} (K(\mathbb{Z},4) ,H^{4} (K(\mathbb{Z},3),\mathbb{Z}))= H^{0} (K(\mathbb{Z},4) ,0) \simeq 0$$
$$E^{0,5} _{2} =H^{0} (K(\mathbb{Z},4) ,H^{5} (K(\mathbb{Z},3),\mathbb{Z}))= H^{0} (K(\mathbb{Z},4) ,0) \simeq 0$$
$$E^{0,6} _{2} =H^{0} (K(\mathbb{Z},4) ,H^{6} (K(\mathbb{Z},3),\mathbb{Z}))= H^{0} (K(\mathbb{Z},4) ,\mathbb{Z}_{2}) \simeq \mathbb{Z}_{2}$$
$$E^{0,7} _{2} =H^{0} (K(\mathbb{Z},4) ,H^{7} (K(\mathbb{Z},3),\mathbb{Z}))= H^{0} (K(\mathbb{Z},4) ,0) \simeq 0$$
$$E^{0,8} _{2} =H^{0} (K(\mathbb{Z},4) ,H^{8} (K(\mathbb{Z},3),\mathbb{Z}))= H^{0} (K(\mathbb{Z},4) ,\mathbb{Z}_{3}) \simeq \mathbb{Z}_{3}$$
$$E^{0,9} _{2} =H^{0} (K(\mathbb{Z},4) ,H^{9} (K(\mathbb{Z},3),\mathbb{Z}))= H^{0} (K(\mathbb{Z},4) ,\mathbb{Z}_{2}) \simeq \mathbb{Z}_{2}$$\\
$$E^{1,0} _{2} =H^{1} (K(\mathbb{Z},4) ,H^{0} (K(\mathbb{Z},3),\mathbb{Z}))= H^{1} (K(\mathbb{Z},4) ,\mathbb{Z}) \simeq 0$$
$$E^{1,1} _{2} =H^{1} (K(\mathbb{Z},4) ,H^{1} (K(\mathbb{Z},3),\mathbb{Z}))= H^{1} (K(\mathbb{Z},4) ,0) \simeq 0$$
$$E^{1,2} _{2} =H^{1} (K(\mathbb{Z},4) ,H^{2} (K(\mathbb{Z},3),\mathbb{Z}))= H^{1} (K(\mathbb{Z},4) ,0) \simeq 0$$
$$E^{1,3} _{2} =H^{1} (K(\mathbb{Z},4) ,H^{3} (K(\mathbb{Z},3),\mathbb{Z}))= H^{1} (K(\mathbb{Z},4) ,\mathbb{Z}) \simeq 0$$
$$E^{1,4} _{2} =H^{1} (K(\mathbb{Z},4) ,H^{4} (K(\mathbb{Z},3),\mathbb{Z}))= H^{1} (K(\mathbb{Z},4) ,0) \simeq 0$$
$$E^{1,5} _{2} =H^{1} (K(\mathbb{Z},4) ,H^{5} (K(\mathbb{Z},3),\mathbb{Z}))= H^{1} (K(\mathbb{Z},4) ,0) \simeq 0$$
$$E^{1,6} _{2} =H^{1} (K(\mathbb{Z},4) ,H^{6} (K(\mathbb{Z},3),\mathbb{Z}))= H^{1} (K(\mathbb{Z},4) ,\mathbb{Z}_{2}) \simeq 0$$
$$E^{1,7} _{2} =H^{1} (K(\mathbb{Z},4) ,H^{7} (K(\mathbb{Z},3),\mathbb{Z}))= H^{1} (K(\mathbb{Z},4) ,0) \simeq 0$$
$$E^{1,8} _{2} =H^{1} (K(\mathbb{Z},4) ,H^{8} (K(\mathbb{Z},3),\mathbb{Z}))= H^{1} (K(\mathbb{Z},4) ,\mathbb{Z}_{3}) \simeq 0$$
$$E^{1,9} _{2} =H^{1} (K(\mathbb{Z},4) ,H^{9} (K(\mathbb{Z},3),\mathbb{Z}))= H^{1} (K(\mathbb{Z},4) ,\mathbb{Z}_{2}) \simeq 0$$\\
$$E^{2,0} _{2} =H^{2} (K(\mathbb{Z},4) ,H^{0} (K(\mathbb{Z},3),\mathbb{Z}))= H^{2} (K(\mathbb{Z},4) ,\mathbb{Z}) \simeq 0$$
$$E^{2,1} _{2} =H^{2} (K(\mathbb{Z},4) ,H^{1} (K(\mathbb{Z},3),\mathbb{Z}))= H^{2} (K(\mathbb{Z},4) ,0) \simeq 0$$
$$E^{2,2} _{2} =H^{2} (K(\mathbb{Z},4) ,H^{2} (K(\mathbb{Z},3),\mathbb{Z}))= H^{2} (K(\mathbb{Z},4) ,0) \simeq 0$$
$$E^{2,3} _{2} =H^{2} (K(\mathbb{Z},4) ,H^{3} (K(\mathbb{Z},3),\mathbb{Z}))= H^{2} (K(\mathbb{Z},4) ,\mathbb{Z}) \simeq 0$$
$$E^{2,4} _{2} =H^{2} (K(\mathbb{Z},4) ,H^{4} (K(\mathbb{Z},3),\mathbb{Z}))= H^{2} (K(\mathbb{Z},4) ,0) \simeq 0$$
$$E^{2,5} _{2} =H^{2} (K(\mathbb{Z},4) ,H^{5} (K(\mathbb{Z},3),\mathbb{Z}))= H^{2} (K(\mathbb{Z},4) ,0) \simeq 0$$
$$E^{2,6} _{2} =H^{2} (K(\mathbb{Z},4) ,H^{6} (K(\mathbb{Z},3),\mathbb{Z}))= H^{2} (K(\mathbb{Z},4) ,\mathbb{Z}_{2}) \simeq 0$$
$$E^{2,7} _{2} =H^{2} (K(\mathbb{Z},4) ,H^{7} (K(\mathbb{Z},3),\mathbb{Z}))= H^{2} (K(\mathbb{Z},4) ,0) \simeq 0$$
$$E^{2,8} _{2} =H^{2} (K(\mathbb{Z},4) ,H^{8} (K(\mathbb{Z},3),\mathbb{Z}))= H^{2} (K(\mathbb{Z},4) ,\mathbb{Z}_{3}) \simeq 0$$
$$E^{2,9} _{2} =H^{2} (K(\mathbb{Z},4) ,H^{9} (K(\mathbb{Z},3),\mathbb{Z}))= H^{2} (K(\mathbb{Z},4) ,\mathbb{Z}_{2}) \simeq 0$$\\
$$E^{3,0} _{2} =H^{3} (K(\mathbb{Z},4) ,H^{0} (K(\mathbb{Z},3),\mathbb{Z}))= H^{3} (K(\mathbb{Z},4) ,\mathbb{Z}) \simeq 0$$
$$E^{3,1} _{2} =H^{3} (K(\mathbb{Z},4) ,H^{1} (K(\mathbb{Z},3),\mathbb{Z}))= H^{3} (K(\mathbb{Z},4) ,0) \simeq 0$$
$$E^{3,2} _{2} =H^{3} (K(\mathbb{Z},4) ,H^{2} (K(\mathbb{Z},3),\mathbb{Z}))= H^{3} (K(\mathbb{Z},4) ,0) \simeq 0$$
$$E^{3,3} _{2} =H^{3} (K(\mathbb{Z},4) ,H^{3} (K(\mathbb{Z},3),\mathbb{Z}))= H^{3} (K(\mathbb{Z},4) ,\mathbb{Z}) \simeq 0$$
$$E^{3,4} _{2} =H^{3} (K(\mathbb{Z},4) ,H^{4} (K(\mathbb{Z},3),\mathbb{Z}))= H^{3} (K(\mathbb{Z},4) ,0) \simeq 0$$
With the above results we can compute page 2 as follows:
\begin{center}
\begin{figure}
\begin{tikzpicture}[scale=.15]
\draw[->] (0,-60)--(60,-60);
\draw[->] (0,-60)--(0,0);
\node at (2,-57){$\mathbb{Z}$};
\node at (2,-52){$0$};
\node at (2,-47){$0$};
\node at (2,-42){$\mathbb{Z}$};
\node at (2,-37){$0$};
\node at (2,-32){$0$};
\node at (2,-27){$\mathbb{Z}_{2}$};
\node at (2,-22){$0$};
\node at (2,-17){$\mathbb{Z}_{3}$};
\node at (2,-12){$\mathbb{Z}_{2}$};
\node at (2,-7){$\mathbb{Z}_{2}$};
\node at (7,-57){$0$};
\node at (7,-52){$0$};
\node at (7,-47){$0$};
\node at (7,-42){$0$};
\node at (7,-37){$0$};
\node at (7,-32){$0$};
\node at (7,-27){$0$};
\node at (7,-22){$0$};
\node at (7,-17){$0$};
\node at (7,-12){$0$};
\node at (7,-7){$0$};
\node at (12,-57){$0$};
\node at (12,-52){$0$};
\node at (12,-47){$0$};
\node at (12,-42){$0$};
\node at (12,-37){$0$};
\node at (12,-32){$0$};
\node at (12,-27){$0$};
\node at (12,-22){$0$};
\node at (12,-17){$0$};
\node at (12,-12){$0$};
\node at (12,-7){$0$};
\node at (17,-57){$0$};
\node at (17,-52){$0$};
\node at (17,-47){$0$};
\node at (17,-42){$0$};
\node at (17,-37){$0$};
\node at (17,-32){$0$};
\node at (17,-27){$0$};
\node at (17,-22){$0$};
\node at (17,-17){$0$};
\node at (17,-12){$0$};
\node at (17,-7){$0$};;
\node at (22,-57){$E^{4,0} _{2}$};
\node at (22,-52){$0$};
\node at (22,-47){$0$};
\node at (22,-42){$E^{4,3} _{2}$};
\node at (22,-37){$0$};
\node at (22,-32){$0$};
\node at (22,-27){$E^{4,6} _{2}$};
\node at (22,-22){$0$};
\node at (22,-17){$E^{4,8} _{2} $};
\node at (22,-12){$E^{4,9} _{2}$};
\node at (22,-7){$...$};
\node at (27,-57){$E^{5,0} _{2}$};
\node at (27,-52){$0$};
\node at (27,-47){$0$};
\node at (27,-42){$E^{5,0} _{2}$};
\node at (27,-37){$0$};
\node at (27,-32){$0$};
\node at (27,-27){$0$};
\node at (27,-22){$0$};
\node at (27,-17){$0$};
\node at (27,-12){$0$};
\node at (27,-7){$...$};
\node at (32,-57){$E^{6,0} _{2}$};
\node at (32,-52){$0$};
\node at (32,-47){$0$};
\node at (32,-42){$E^{6,0} _{2}$};
\node at (32,-37){$0$};
\node at (32,-32){$0$};
\node at (32,-27){$0$};
\node at (32,-22){$0$};
\node at (32,-17){$0$};
\node at (32,-12){$0$};
\node at (32,-7){$...$};
\node at (38,-57){$E^{7,0} _{2}$};
\node at (38,-52){$0$};
\node at (38,-47){$0$};
\node at (38,-42){$E^{7,3} _{2}$};
\node at (38,-37){$0$};
\node at (38,-32){$0$};
\node at (38,-27){$E^{7,6} _{2}$};
\node at (38,-22){$0$};
\node at (38,-17){$E^{7,8} _{2}$};
\node at (38,-12){$E^{7,9} _{2}$};
\node at (38,-7){$...$};
\node at (43,-57){$E^{8,0} _{2}$};
\node at (43,-52){$0$};
\node at (43,-47){$0$};
\node at (43,-42){$E^{8,3} _{2}$};
\node at (43,-37){$0$};
\node at (43,-32){$0$};
\node at (43,-27){$E^{8,6} _{2}$};
\node at (43,-22){$0$};
\node at (43,-17){$E^{8,8} _{2}$};
\node at (43,-12){$E^{8,9} _{2}$};
\node at (43,-7){$...$};
\node at (48,-57){$E^{9,0} _{2}$};
\node at (48,-52){$0$};
\node at (48,-47){$0$};
\node at (48,-42){$E^{9,3} _{2}$};
\node at (48,-37){$0$};
\node at (48,-32){$0$};
\node at (48,-27){$E^{9,6} _{2}$};
\node at (48,-22){$0$};
\node at (48,-17){$E^{9,8} _{2}$};
\node at (48,-12){$E^{9,9} _{2}$};
\node at (48,-7){$...$};
\node at (53,-57){$E^{10,0} _{2}$};
\node at (53,-52){$0$};
\node at (53,-47){$0$};
\node at (53,-42){$E^{10,3} _{2}$};
\node at (53,-37){$0$};
\node at (53,-32){$0$};
\node at (53,-27){$E^{10,6} _{2}$};
\node at (53,-22){$0$};
\draw [->] (2,-28)--(20,-40);
\draw [->] (23,-42)--(41,-56);
\draw [->] (2,-28)--(35,-56);
\draw [->] (2,-19)--(46,-56);
\draw [->] (3,-42)--(20,-56);
\end{tikzpicture}
\caption{page $2$}
\end{figure}
\end{center}
Since the topological space $PK(\mathbb{Z} ,4)$ is contractible, then for any $p$ and $q$, $E^{p,q} _{\infty}$ converges to $H^{p+q} (PK(\mathbb{Z},4),\mathbb{Z})$. In the other words, we have
$$E^{p,q} _{\infty} \Rightarrow H^{p+q} (PK(\mathbb{Z},4),\mathbb{Z}) = 0$$
\begin{lem}
$E^{5,0} _{2} = H^{5} (K(\mathbb{Z},4),\mathbb{Z}) =0$
\end{lem}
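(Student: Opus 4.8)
The plan is to read off the conclusion from the contractibility of the total space together with a bidegree-chasing argument around the slot $(5,0)$. Since $PK(\mathbb{Z},4)$ is contractible, the spectral sequence converges to zero in every total degree $p+q>0$; in particular $E^{5,0}_{\infty}=0$. Because the $E_2$ page is defined so that $E^{5,0}_2=H^5(K(\mathbb{Z},4);H^0(K(\mathbb{Z},3)))=H^5(K(\mathbb{Z},4);\mathbb{Z})$, it therefore suffices to show that the group sitting at $(5,0)$ is never changed by a differential, so that $E^{5,0}_2=E^{5,0}_{\infty}=0$.

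First I would dispose of the outgoing differentials. A differential on page $r$ has the form $d_r\colon E^{5,0}_r\to E^{5+r,\,1-r}_r$, and for every $r\ge 2$ its target lies in the row $q=1-r<0$, which is identically zero. Hence all differentials leaving $(5,0)$ vanish, and $\ker d_r$ at this slot is the whole group $E^{5,0}_r$.

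Next I would examine the incoming differentials $d_r\colon E^{5-r,\,r-1}_r\to E^{5,0}_r$. As $r$ ranges over $2,3,4,5$ the possible sources are the page-$2$ entries $E^{3,1}_2$, $E^{2,2}_2$, $E^{1,3}_2$ and $E^{0,4}_2$ (for $r\ge 6$ the source has negative first index and is zero). Using $E^{p,q}_2\approx H^p(K(\mathbb{Z},4);H^q(K(\mathbb{Z},3)))$ together with the low-degree vanishing of $H^{*}(K(\mathbb{Z},4))$ and the known cohomology of $K(\mathbb{Z},3)$, each of these four groups is zero: in each case either the coefficient group $H^q(K(\mathbb{Z},3))$ vanishes or the base group $H^p(K(\mathbb{Z},4))$ does. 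Consequently every incoming differential has image $0$.

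Putting the two computations together gives $E^{5,0}_{r+1}=\ker d_r/\mathrm{Im}\,d_r=E^{5,0}_r/0=E^{5,0}_r$ for all $r\ge 2$, so the slot is stable and $E^{5,0}_2=E^{5,0}_{\infty}=0$. I do not expect a genuine obstacle here; the only point demanding care is confirming that all four source entries really vanish, since a single nonzero differential into $(5,0)$ would invalidate the argument---but this follows at once from the page-$2$ table already assembled.
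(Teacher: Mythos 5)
Your proposal is correct and takes essentially the same route as the paper's own proof: both argue that every differential into or out of the $(5,0)$ slot vanishes, so $E^{5,0}_{2}=E^{5,0}_{\infty}=0$ by convergence of the spectral sequence for the contractible space $PK(\mathbb{Z},4)$. You merely make explicit the bookkeeping the paper leaves implicit, checking the outgoing targets lie in negative rows and that the four possible sources $E^{3,1}_{2}$, $E^{2,2}_{2}$, $E^{1,3}_{2}$, $E^{0,4}_{2}$ all vanish.
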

\begin{proof}
Since each differential map in page 2 and other pages come and go out $E^{5,0} _{2}$ are zero, then $E^{5,0} _{2}$ will remain to page infinity and converges to $E^{5,0} _{\infty} =0$, then 
$$E^{5,0} _{2} = H^{5} (K(\mathbb{Z},4),\mathbb{Z}) =0$$
\end{proof}
\begin{lem}
$E^{6,0} _{2} = H^{6} (K(\mathbb{Z},4),\mathbb{Z})=0$
\end{lem}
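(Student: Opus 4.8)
The plan is to argue exactly as in the proof of the previous lemma for $E^{5,0}_2$: I want to show that the entry $E^{6,0}_2$ is untouched by every differential on every page, so that it persists unchanged to the limit, where the contractibility of $PK(\mathbb{Z},4)$ forces it to vanish.

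First I would observe that $E^{6,0}$ lies on the bottom row $q=0$, so every differential leaving it, namely $d_r\colon E^{6,0}_r\to E^{6+r,\,1-r}_r$, lands in a position with second coordinate $1-r<0$ for $r\ge 2$ and is therefore zero. Thus only the incoming differentials can affect $E^{6,0}$, and these are the maps $d_r\colon E^{6-r,\,r-1}_r\to E^{6,0}_r$ for $r=2,3,4,5,6$.

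Next I would check that each source $E^{6-r,\,r-1}_2$ already vanishes on page $2$, and hence on every later page (being a subquotient of the $E_2$ entry): for $r=2$ and $r=3$ the second coordinate is $q=1$ and $q=2$, where $H^q(K(\mathbb{Z},3))=0$; for $r=4$ and $r=5$ the first coordinate is $p=2$ and $p=1$, where $H^p(K(\mathbb{Z},4),\mathbb{Z})=0$ by the values established earlier in this section; and for $r=6$ the second coordinate is $q=5$, where $H^5(K(\mathbb{Z},3))=0$. Consequently every incoming differential is zero as well.

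Since no differential enters or leaves $E^{6,0}$, we have $E^{6,0}_2=E^{6,0}_3=\cdots=E^{6,0}_\infty$. Because $PK(\mathbb{Z},4)$ is contractible, $E^{p,q}_\infty\Rightarrow H^{p+q}(PK(\mathbb{Z},4),\mathbb{Z})=0$, so in particular $E^{6,0}_\infty=0$, which gives $E^{6,0}_2=H^6(K(\mathbb{Z},4),\mathbb{Z})=0$. I expect no genuine obstacle here; the only care needed is to enumerate all five incoming differentials and confirm each source entry is zero, the same bookkeeping that made the $E^{5,0}_2$ argument go through.
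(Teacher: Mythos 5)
Your proposal is correct and is exactly the paper's argument: the paper's proof of this lemma simply says ``by the same method'' as the $E^{5,0}_2$ case, namely that every differential into or out of the $(6,0)$ position vanishes, so $E^{6,0}_2 = E^{6,0}_\infty = 0$ by contractibility of $PK(\mathbb{Z},4)$. Your version is in fact more careful than the paper's, since you explicitly enumerate the incoming differentials $d_r\colon E^{6-r,\,r-1}_r \to E^{6,0}_r$ for $r=2,\dots,6$ and verify each source vanishes on page $2$ (hence on all later pages), details the paper leaves implicit.
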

\begin{proof}
Similarly by using the same method, we obtain $E^{6,0} _{2} = H^{6} (K(\mathbb{Z},4),\mathbb{Z})=0$.
\end{proof}
\begin{lem}
$E^{7,0} _{2} = 0$
\end{lem}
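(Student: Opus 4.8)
The plan is to follow the template of Lemmas 4.1 and 4.2. Since $E^{7,0}_r$ sits on the bottom edge $q=0$, every outgoing differential $d_r\colon E^{7,0}_r\to E^{7+r,\,1-r}_r$ lands in a group with negative second index and is therefore zero; so $E^{7,0}$ can only be changed by differentials mapping \emph{into} it. Because the total space $PK(\mathbb{Z},4)$ is contractible, $E^{7,0}_\infty=0$. Hence it suffices to show that every incoming differential vanishes: then $E^{7,0}$ survives unchanged to the limit and $E^{7,0}_2=E^{7,0}_\infty=0$.

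First I would enumerate the incoming differentials $d_r\colon E^{7-r,\,r-1}_r\to E^{7,0}_r$ for $2\le r\le 7$ and read off each source from the $E_2$-page, using the cohomology of $K(\mathbb{Z},3)$ together with $H^1(K(\mathbb{Z},4))=H^2(K(\mathbb{Z},4))=H^3(K(\mathbb{Z},4))=0$:
\begin{align*}
E^{5,1}_2 &= H^5(K(\mathbb{Z},4);H^1(K(\mathbb{Z},3))) = 0,\\
E^{4,2}_2 &= H^4(K(\mathbb{Z},4);H^2(K(\mathbb{Z},3))) = 0,\\
E^{3,3}_2 &= H^3(K(\mathbb{Z},4);\mathbb{Z}) = 0,\\
E^{2,4}_2 &= H^2(K(\mathbb{Z},4);H^4(K(\mathbb{Z},3))) = 0,\\
E^{1,5}_2 &= H^1(K(\mathbb{Z},4);H^5(K(\mathbb{Z},3))) = 0.
\end{align*}
Since $H^q(K(\mathbb{Z},3))=0$ for $q\in\{1,2,4,5\}$ and $H^3(K(\mathbb{Z},4))=0$, all five sources vanish; consequently $d_2=d_3=d_4=d_5=d_6=0$ into $E^{7,0}$, and therefore $E^{7,0}_7=E^{7,0}_2$.

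The only surviving incoming differential is the transgression $d_7\colon E^{0,6}_7\to E^{7,0}_7$, whose source begins as $E^{0,6}_2=H^0(K(\mathbb{Z},4);H^6(K(\mathbb{Z},3)))=\mathbb{Z}_2$. This single step is not formal, and the whole weight of the lemma rests on it: the plan is to show that this $\mathbb{Z}_2$ produces no nonzero image in $E^{7,0}$, i.e.\ that $d_7=0$, after which $E^{7,0}$ is untouched and equals $E^{7,0}_\infty=0$ exactly as in the previous two lemmas. I would first localize the difficulty by noting that the only nonzero outgoing differentials from $E^{0,6}$ have targets $E^{4,3}_4$, with $E^{4,3}_2=H^4(K(\mathbb{Z},4);\mathbb{Z})=\mathbb{Z}$, and $E^{7,0}_7$ itself; since $E^{4,3}$ is torsion-free, no $d_4$ out of a $\mathbb{Z}_2$ can be nonzero, so the class necessarily persists to page $7$ and the entire lemma reduces to controlling $d_7$.

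The hard part will be precisely this control of the transgression on the edge class $E^{0,6}_2=\mathbb{Z}_2$. The natural route is to identify its generator with the square $\iota_3^{\,2}$ of the fiber's fundamental class and to compute $d_7$ via the way Steenrod operations interact with the known transgression $\tau(\iota_3)=\iota_4$, i.e.\ a Kudo-type analysis rather than any dimension count. This multiplicative input is the sole substantive ingredient the proof must supply; once the behaviour of $\iota_3^{\,2}$ under $d_7$ is pinned down, the enumeration above closes the argument and yields $E^{7,0}_2=H^7(K(\mathbb{Z},4),\mathbb{Z})=0$.
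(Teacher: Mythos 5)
Your formal reduction is correct, and it is in fact more careful than the paper's own one-line proof (which just says ``by the same method as $E^{5,0}_2$''). But the final step of your plan --- proving that the transgression $d_7\colon E^{0,6}_7\to E^{7,0}_7$ vanishes --- is impossible, and your own intermediate observations already show why. You established that the only possibly nonzero differentials out of $E^{0,6}_2\cong\mathbb{Z}_2$ are $d_4$ into $E^{4,3}_4\cong\mathbb{Z}$ (zero, since $\mathbb{Z}$ is torsion-free) and $d_7$. If $d_7$ were also zero, the class at $(0,6)$ would survive to $E^{0,6}_\infty\cong\mathbb{Z}_2\neq 0$, contradicting the contractibility of $PK(\mathbb{Z},4)$. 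Hence $d_7$ is forced to be injective; and since, as you enumerated, it is also the only differential that can kill $E^{7,0}$, it must be an isomorphism. The Kudo-type analysis you propose confirms this rather than rescuing the lemma: $\iota_3^{\,2}=Sq^{3}\iota_3$ transgresses to $Sq^{3}\iota_4=Sq^{1}Sq^{2}\iota_4\neq 0$ (the excess of $Sq^{3}$ is $3<4$), which is the mod-$2$ reduction of the integral Bockstein of $Sq^{2}\iota_4$, a nonzero class of order $2$. The correct conclusion is therefore $E^{7,0}_2=H^{7}(K(\mathbb{Z},4),\mathbb{Z})\cong\mathbb{Z}_2$, not $0$.

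So the statement itself is false, and the paper's proof fails for exactly the differential you isolated: it ignores the incoming transgression from $E^{0,6}_2\cong\mathbb{Z}_2$ --- an arrow the paper even draws in its own Figure 1. An independent cross-check: $H_6(K(\mathbb{Z},4),\mathbb{Z})\cong\mathbb{Z}_2$ (the stable group $H_{n+2}(K(\mathbb{Z},n))\cong\mathbb{Z}_2$), so universal coefficients give $H^{7}(K(\mathbb{Z},4),\mathbb{Z})\cong Ext(\mathbb{Z}_2,\mathbb{Z})\cong\mathbb{Z}_2$. Note also that the error propagates in the paper: once $d_7$ is an isomorphism, $E^{0,6}$ cannot feed $E^{4,3}$, so the paper's later ``exact sequence'' $0\to\mathbb{Z}_2\to\mathbb{Z}\to E^{8,0}_2\to 0$ (which would require an injection $\mathbb{Z}_2\hookrightarrow\mathbb{Z}$, impossible) also collapses; there the correct value is $H^{8}(K(\mathbb{Z},4),\mathbb{Z})\cong\mathbb{Z}$, generated by $\iota_4^{\,2}$. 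Your proof strategy was sound up to the last step; had you executed the Kudo computation, you would have disproved the lemma rather than proved it, which is the right outcome here.
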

\begin{proof}
By the same method which was used for case $E^{5,0} _{2}$.
\end{proof}
\begin{lem} 
$E^{8,0} _{2} = H^{8} (K(\mathbb{Z},4),\mathbb{Z}) \simeq \mathbb{Z}_2$
\end{lem}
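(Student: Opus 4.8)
The plan is to read off $H^{8}(K(\mathbb{Z},4),\mathbb{Z})=E^{8,0}_{2}$ from the collapse forced by contractibility of $PK(\mathbb{Z},4)$, which makes $E^{p,q}_{\infty}=0$ for all $p+q>0$, by locating the single differential that can alter the corner $E^{8,0}$. First I would determine which $d_{r}$ meet that spot. Nothing leaves $E^{8,0}$, because each target $E^{8+r,1-r}_{r}$ lies in negative fibre degree; among the incoming arrows $d_{r}\colon E^{8-r,r-1}_{r}\to E^{8,0}_{r}$ the table annihilates every source except $r=4$, where $E^{4,3}_{2}=H^{4}(K(\mathbb{Z},4);H^{3}(K(\mathbb{Z},3)))\simeq\mathbb{Z}$. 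Since no earlier differential touches either entry, $E^{4,3}_{4}=E^{4,3}_{2}\simeq\mathbb{Z}$ and $E^{8,0}_{4}=E^{8,0}_{2}=H^{8}(K(\mathbb{Z},4),\mathbb{Z})$, so the whole corner is controlled by the one map $d_{4}\colon E^{4,3}_{4}\to E^{8,0}_{4}$.

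Next I would compute that $d_{4}$ from the multiplicative structure. Writing $a$ for the generator of $H^{3}(K(\mathbb{Z},3))$ and $\iota_{4}$ for the generator of $E^{4,0}_{2}$, convergence makes the transgression $d_{4}\colon E^{0,3}_{4}\to E^{4,0}_{4}$ the isomorphism $a\mapsto\iota_{4}$. The generator of $E^{4,3}_{2}$ is $\iota_{4}\cdot a$, and because $d_{4}(\iota_{4})\in E^{8,-3}_{4}=0$ the Leibniz rule yields $d_{4}(\iota_{4}\cdot a)=\iota_{4}^{2}\in E^{8,0}_{4}$. Thus the entire computation is packaged into the single homomorphism $\mathbb{Z}\to H^{8}$ sending the generator to the cup-square $\iota_{4}^{2}$, and $E^{4,3}_{\infty}=E^{8,0}_{\infty}=0$ will pin down both its kernel and its cokernel.

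The step I expect to be the real obstacle is the torsion bookkeeping that fixes this cokernel, because it is not independent of the rest of the page. The same Leibniz computation shows that the $2$-torsion class $a^{2}\in E^{0,6}_{2}\simeq\mathbb{Z}_{2}$ has $d_{4}(a^{2})=0$, since $a$ has odd degree; hence $a^{2}$ cannot be removed by any differential into the torsion-free group $E^{4,3}$ and is obliged to transgress on a later page into the bottom row at $E^{7,0}=H^{7}(K(\mathbb{Z},4),\mathbb{Z})$. The crux is therefore to verify that the disposal of $a^{2}$, the value $d_{4}(\iota_{4}\cdot a)=\iota_{4}^{2}$, and the recorded groups $H^{5},H^{6},H^{7}$ are all simultaneously consistent with $E^{p,q}_{\infty}=0$; since $\iota_{4}^{2}$ is already non-zero in rational cohomology, this bookkeeping on the row $q=6$ is exactly what governs the torsion in degree $8$, and I would treat getting it to close up coherently as the heart of the argument.
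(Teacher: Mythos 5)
Your setup is correct and carefully done: the only differential touching the corner is $d_{4}\colon E^{4,3}_{4}\to E^{8,0}_{4}$ with $E^{4,3}_{2}\simeq\mathbb{Z}\{\iota_{4}a\}$, the transgression gives $d_{4}(a)=\iota_{4}$, Leibniz gives $d_{4}(\iota_{4}a)=\iota_{4}^{2}$, and $d_{4}(a^{2})=\iota_{4}a-\iota_{4}a=0$ because $a$ has odd degree. But the gap is in your final paragraph, where you defer to ``torsion bookkeeping'' in the hope that it closes up to give $\mathbb{Z}_{2}$: it cannot, and your own computation already shows the lemma as stated is \emph{false}. Since the incoming $d_{4}\colon E^{0,6}_{4}\simeq\mathbb{Z}_{2}\to E^{4,3}_{4}\simeq\mathbb{Z}$ is zero (torsion into a free group, consistent with $d_{4}(a^{2})=0$), the only way to kill the $\mathbb{Z}$ at $(4,3)$ is injectivity of your $d_{4}$, and the only way to kill $E^{8,0}$ is surjectivity of the same map; convergence therefore forces $d_{4}$ to be an isomorphism, i.e.\ $H^{8}(K(\mathbb{Z},4),\mathbb{Z})\simeq\mathbb{Z}$, generated by $\iota_{4}^{2}$. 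Your remark that $\iota_{4}^{2}\neq 0$ rationally is not a side issue but a one-line refutation of the claim: $H^{8}(K(\mathbb{Z},4);\mathbb{Q})\simeq\mathbb{Q}$ forces rank one, so no finite group can be the answer. Meanwhile the surviving class $a^{2}$ transgresses via $d_{7}$ into $E^{7,0}$, forcing $H^{7}(K(\mathbb{Z},4),\mathbb{Z})\simeq\mathbb{Z}_{2}$ rather than the $0$ recorded in the paper's earlier lemma; the ``consistency check'' you flagged as the crux genuinely fails for the paper's table, not for your differentials.

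For comparison, the paper's own proof is where the error lives: it posits a short exact sequence $0\to\mathbb{Z}_{2}\simeq E^{0,6}_{2}\to E^{4,3}_{2}\simeq\mathbb{Z}\to E^{8,0}_{2}\to 0$ whose left map is claimed to be injective and ``multiplication by $2$.'' There is no injective homomorphism $\mathbb{Z}_{2}\to\mathbb{Z}$, so the sequence is incoherent. The multiplication-by-$2$ pattern is transplanted from the standard computation of $H^{6}(K(\mathbb{Z},3))$ via the fibration over $K(\mathbb{Z},3)$, where the fibre class $x\in H^{2}(K(\mathbb{Z},2))$ has \emph{even} degree and $d_{3}(x^{2})=2xu$ produces the $\mathbb{Z}_{2}$; here the fibre class $a$ has odd degree, the factor of $2$ disappears, and the $2$-torsion is exported to degree $7$ instead of degree $8$. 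So you should conclude, rather than hedge: $H^{8}(K(\mathbb{Z},4),\mathbb{Z})\simeq\mathbb{Z}$, and the statement you were asked to prove (and its proof in the paper) is incorrect.
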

\begin{proof}                                      
A spectral sequence $ \lbrace E_{r} ^{p,q} , d_{r} \rbrace $ is a collection of vector space or groups $E_{r} ^{p,q}$, equipped with differential map $d_{r} (i.e,  d_{r} ^{2} =0 )$
$$E_{r+1} ^{p,q} = H (E_{r} ^{p,q} , d_{r})$$
Now consider following  sequence, such that $d_{2} ^{2} =0 $ 
\begin{equation}
0 \longrightarrow \mathbb{Z} _{2} \simeq E^{0,6} _{2} \longrightarrow E^{4,3} _{2} \simeq \mathbb{Z} \longrightarrow E^{8,0} _{2} \longrightarrow 0
\end{equation}
The left differential map in the above exact sequence is injective and multiplicable by $2$, but because of exactness and converging properties, the right map should be surjective, so we have
$$E^{8,0} _{2} = H^{8} (K(\mathbb{Z},4),\mathbb{Z}) \simeq \mathbb{Z}_2$$
\end{proof}
\begin{lem}
$E^{9,0} _{2} = H^{9} (K(\mathbb{Z},4),\mathbb{Z}) = \mathbb{Z}_{3} $
\end{lem}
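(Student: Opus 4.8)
The plan is to mimic the argument used for $E^{8,0}_2$ in Lemma 4.4, but now to transport the $\mathbb{Z}_3$ sitting in the entry $E^{0,8}_2$ diagonally across the page to the base corner $E^{9,0}_2$. The two relevant entries are $E^{0,8}_2 = H^0(K(\mathbb{Z},4);H^8(K(\mathbb{Z},3),\mathbb{Z})) \simeq \mathbb{Z}_3$ and $E^{9,0}_2 = H^9(K(\mathbb{Z},4),\mathbb{Z})$, the group we wish to identify. Since $PK(\mathbb{Z},4)$ is contractible, every entry must die at $E_\infty$; in particular $E^{0,8}_\infty = 0$ and $E^{9,0}_\infty = 0$. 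The strategy is to show that the \emph{only} differential acting on either entry is $d_9 \colon E^{0,8}_9 \to E^{9,0}_9$, and then to read off $H^9$ from the fact that this single map must be an isomorphism.

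First I would check that $E^{9,0}$ survives unchanged up to page $9$. There are no outgoing differentials, since $d_r$ would land in $E^{9+r,1-r}_r$ with negative vertical degree. For the incoming maps $d_r \colon E^{9-r,r-1}_r \to E^{9,0}_r$ with $2 \le r \le 8$, I would verify that every source already vanishes on the $E_2$-page: the sources $E^{7,1}_2, E^{6,2}_2, E^{4,4}_2, E^{3,5}_2, E^{1,7}_2$ lie in the zero rows $q=1,2,4,5,7$ of $H^*(K(\mathbb{Z},3))$, while $E^{5,3}_2$ and $E^{2,6}_2$ vanish because $H^5(K(\mathbb{Z},4))=0$ (Lemma 4.1) and $H^2(K(\mathbb{Z},4))=0$. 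Hence $E^{9,0}_9 = E^{9,0}_2 = H^9(K(\mathbb{Z},4),\mathbb{Z})$.

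Symmetrically, I would check that $E^{0,8}$ survives unchanged to page $9$: it receives no differentials, since the sources $E^{-r,r+7}_r$ lie in negative horizontal degree, and each outgoing $d_r \colon E^{0,8}_r \to E^{r,9-r}_r$ for $2 \le r \le 8$ has zero target --- the targets $E^{2,7}_2, E^{4,5}_2, E^{5,4}_2, E^{7,2}_2, E^{8,1}_2$ again occupy the zero rows $q=1,2,4,5,7$, while $E^{3,6}_2$ and $E^{6,3}_2$ vanish because $H^3(K(\mathbb{Z},4))=0$ and $H^6(K(\mathbb{Z},4))=0$ (Lemma 4.2). Thus $E^{0,8}_9 = \mathbb{Z}_3$. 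The only surviving map between the two is therefore $d_9 \colon E^{0,8}_9 = \mathbb{Z}_3 \to E^{9,0}_9 = H^9(K(\mathbb{Z},4),\mathbb{Z})$, and since no differentials act on pages $r>9$ either, we get $E^{0,8}_\infty = \ker d_9$ and $E^{9,0}_\infty = \operatorname{coker} d_9$.

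The routine part is this vanishing bookkeeping; the genuine content --- and the step I expect to need the most care --- is confirming that $d_9$ is an \emph{isomorphism} rather than merely that the two entries have equal order. This is exactly where contractibility of the path space is indispensable: it forces $E^{0,8}_\infty = 0$ and $E^{9,0}_\infty = 0$ simultaneously. The first vanishing says $\ker d_9 = 0$, so $d_9$ is injective and its image is a copy of $\mathbb{Z}_3$ inside $H^9$; the second says $\operatorname{coker} d_9 = 0$, so that image is all of $H^9$. Together these pin down $H^9(K(\mathbb{Z},4),\mathbb{Z}) \simeq \mathbb{Z}_3$. The delicate point worth double-checking is precisely that no differential on a page $r \neq 9$ slips in to alter either entry before $d_9$ acts; once that is secured, the double vanishing at $E_\infty$ makes the identification automatic.
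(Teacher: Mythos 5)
Your proposal is correct and takes essentially the same approach as the paper: the paper likewise notes that $E^{9,0}_2$ must die at $E_\infty$ and that the only differential with a chance to kill it is $\mathbb{Z}_3 \simeq E^{0,8} \to E^{9,0}$, which together with $E^{0,8}_\infty = 0$ forces $d_9$ to be an isomorphism. Your page-by-page check that all other differentials touching $E^{9,0}$ and $E^{0,8}$ vanish merely makes explicit the bookkeeping the paper asserts in one line.
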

\begin{proof}
Since $E^{9,0} _{2} = H^{9} (K(\mathbb{Z},4),\mathbb{Z})$ converg to $E^{9,0} _{\infty} =0$, must die too. Similarly the only differential map with a chance to kill it is
$$\mathbb{Z}_{3} \simeq E^{0,8} _{2} \longrightarrow H^{9,0} (K(\mathbb{Z},4),\mathbb{Z})$$
and so it must be an isomorphism.\\
\end{proof}
\begin{lem}\label{89}
$E^{10,0} _{2} \simeq 0 $
\end{lem}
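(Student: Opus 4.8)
The plan is to use that $E^{10,0}_{2}$ lies in the bottom row $q=0$, so every differential leaving it targets a group of negative fibre degree and is therefore zero. Hence on each page $E^{10,0}_{r+1}$ is simply $E^{10,0}_{r}$ modulo the image of the incoming differential $d_{r}\colon E^{10-r,\,r-1}_{r}\longrightarrow E^{10,0}_{r}$, and since $PK(\mathbb{Z},4)$ is contractible the process must terminate at $E^{10,0}_{\infty}=0$. Thus $H^{10}(K(\mathbb{Z},4),\mathbb{Z})=E^{10,0}_{2}$ is exactly the total image of these incoming differentials. First I would run through the possible sources $E^{10-r,\,r-1}_{2}$ for $2\le r\le 10$: using $H^{*}(K(\mathbb{Z},3))$, the already-established vanishing $H^{6}(K(\mathbb{Z},4),\mathbb{Z})=0$, and the universal coefficient theorem (\ref{2}) (which kills $E^{3,6}_{2}=H^{3}(K(\mathbb{Z},4);\mathbb{Z}_{2})$ because $H_{2}=H_{3}=0$), every source is $0$ except $E^{0,9}_{2}=H^{9}(K(\mathbb{Z},3))\simeq\mathbb{Z}_{2}$ at $r=10$. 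Consequently the lemma is equivalent to showing that this $\mathbb{Z}_{2}$ does not survive to page $10$; were it to survive, $d_{10}$ would be forced to be onto and we would instead obtain $E^{10,0}_{2}\simeq\mathbb{Z}_{2}$.

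The decisive point is therefore to destroy $E^{0,9}_{2}$ on an earlier page, and the only available differential is $d_{4}\colon E^{0,9}_{4}\longrightarrow E^{4,6}_{4}$, whose target is $E^{4,6}_{2}=H^{4}(K(\mathbb{Z},4);\mathbb{Z}_{2})\simeq\mathbb{Z}_{2}$ by the universal coefficient theorem. I would identify this $d_{4}$ as an isomorphism by invoking the multiplicative structure of the spectral sequence. Let $\iota_{3}$ generate $H^{3}(K(\mathbb{Z},3))$ and $\iota_{4}$ generate $H^{4}(K(\mathbb{Z},4))$; the transgression gives $d_{4}(\iota_{3})=\iota_{4}$. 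Because $\iota_{3}$ has odd degree, the Leibniz rule yields $d_{4}(\iota_{3}^{2})=\iota_{4}\iota_{3}-\iota_{3}\iota_{4}=0$ (the two terms agree since $\iota_{4}$ has even degree), and hence $d_{4}(\iota_{3}^{3})=\iota_{4}\,\iota_{3}^{2}$. Since $\iota_{3}^{2}$ generates $H^{6}(K(\mathbb{Z},3))\simeq\mathbb{Z}_{2}$ and the mod-$2$ reduction of $\iota_{4}$ generates $H^{4}(K(\mathbb{Z},4);\mathbb{Z}_{2})$, the element $\iota_{4}\,\iota_{3}^{2}$ generates $E^{4,6}_{2}$; and $\iota_{3}^{3}$ generates $E^{0,9}_{2}$, its mod-$2$ reduction being nonzero in the polynomial algebra $H^{*}(K(\mathbb{Z},3);\mathbb{Z}_{2})$. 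Thus $d_{4}$ carries a generator to a generator and is an isomorphism, so $E^{0,9}_{5}=0$.

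With $E^{0,9}$ already annihilated at page $4$ we have $E^{0,9}_{10}=0$, so the single surviving candidate $d_{10}$ into $E^{10,0}$ vanishes as well; every incoming differential is then zero and $E^{10,0}_{2}=E^{10,0}_{\infty}=0$, i.e. $H^{10}(K(\mathbb{Z},4),\mathbb{Z})=0$. I expect the main obstacle to be precisely this page-$4$ computation: one must confirm that the $\mathbb{Z}_{2}$ at $E^{0,9}_{2}$ is cancelled against $E^{4,6}_{2}$ by $d_{4}$ rather than transgressing all the way to $E^{10,0}$ by $d_{10}$. This distinction cannot be made by counting group orders alone, since both scenarios are consistent with the purely additive data; it is resolved only through the ring structure, via the relation $d_{4}(\iota_{3}^{3})=\iota_{4}\,\iota_{3}^{2}$, and it is exactly this multiplicative input that forces the answer $0$ rather than $\mathbb{Z}_{2}$.
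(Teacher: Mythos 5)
Your proof is correct, and it takes a genuinely different---and substantially more complete---route than the paper's. The paper disposes of this lemma in one line, asserting that every differential into and out of $E^{10,0}_{2}$ vanishes; but its own Figure 1 displays the class $E^{0,9}_{2}=H^{9}(K(\mathbb{Z},3);\mathbb{Z})\simeq\mathbb{Z}_{2}$, which a priori supports a transgression $d_{10}\colon E^{0,9}_{10}\to E^{10,0}_{10}$, and the paper never explains why that map is zero. This is exactly the point you isolate, and your resolution---the Kudo-type Leibniz computation $d_{4}(\iota_{3}^{2})=\iota_{4}\iota_{3}-\iota_{3}\iota_{4}=0$, hence $d_{4}(\iota_{3}^{3})=\iota_{4}\iota_{3}^{2}$, a generator of $E^{4,6}_{2}=H^{4}(K(\mathbb{Z},4);\mathbb{Z}_{2})\simeq\mathbb{Z}_{2}$---is the standard multiplicative argument, and it is what actually forces $E^{0,9}_{5}=0$ and therefore $E^{10,0}_{2}=E^{10,0}_{\infty}=0$. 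You are also right that purely additive bookkeeping cannot distinguish the outcomes $0$ and $\mathbb{Z}_{2}$; the paper's avoidance of the ring structure is what derails its neighbouring lemma on $E^{8,0}_{2}$, where the purported injective map $\mathbb{Z}_{2}\to\mathbb{Z}$ cannot exist, and where your relation $d_{4}(\iota_{3}^{2})=0$ shows instead that $\iota_{3}^{2}$ survives to page $7$ and transgresses, giving $H^{7}(K(\mathbb{Z},4);\mathbb{Z})\simeq\mathbb{Z}_{2}$ and $H^{8}(K(\mathbb{Z},4);\mathbb{Z})\simeq\mathbb{Z}$ (the values the paper itself uses as fiber data in Section 5, contradicting its Section 4 corollary). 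One small inaccuracy in your write-up: $d_{4}$ is not quite the ``only available'' differential out of $E^{0,9}$, since with the correct value $H^{7}(K(\mathbb{Z},4);\mathbb{Z})\simeq\mathbb{Z}_{2}$ there is also a potential $d_{7}\colon E^{0,9}_{7}\to E^{7,3}_{7}$; this is harmless, however, because your $d_{4}$ is injective and kills the class on page $4$, before $d_{7}$ or $d_{10}$ could act, so the conclusion $H^{10}(K(\mathbb{Z},4),\mathbb{Z})=0$ stands.
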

\begin{proof}
All differential maps which come and go out are zero, therefore $E^{10,0} _{2}=E^{10,0} _{\infty}$ converges to zero.
\end{proof}
\begin{lem}
$E^{11,0} _{2} \simeq 0 $
\end{lem}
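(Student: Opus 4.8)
The plan is to establish $E^{11,0}_2 = H^{11}(K(\mathbb{Z},4),\mathbb{Z}) = 0$ by the same convergence principle already used for $E^{5,0}_2$ and $E^{10,0}_2$. Since $PK(\mathbb{Z},4)$ is contractible we have $E^{11,0}_\infty = 0$, and along the bottom edge every differential leaving $E^{11,0}$ lands in the half-plane $q = 1-r < 0$ and is therefore zero. Hence $E^{11,0}$ can be reduced only by the incoming differentials $d_r : E_r^{11-r,\,r-1} \to E_r^{11,0}$, and the whole task is to show that each of these is zero; then $E^{11,0}_2 = E^{11,0}_\infty = 0$.

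First I would tabulate the sources $E^{11-r,\,r-1}_2$ for $r = 2,\dots,11$, namely the entries in bidegrees $(9,1),(8,2),(7,3),(6,4),(5,5),(4,6),(3,7),(2,8),(1,9),(0,10)$, reading each off from $E^{p,q}_2 = H^p(K(\mathbb{Z},4); H^q(K(\mathbb{Z},3)))$. Most vanish already on the $E_2$-page: the coefficient groups $H^q(K(\mathbb{Z},3))$ are zero for $q = 1,2,4,5,7$; the entry $E^{7,3}_2 = H^7(K(\mathbb{Z},4),\mathbb{Z})$ is zero by the earlier lemma; and $E^{2,8}_2 = H^2(K(\mathbb{Z},4);\mathbb{Z}_3)$ together with $E^{1,9}_2 = H^1(K(\mathbb{Z},4);\mathbb{Z}_2)$ vanish because $H_1(K(\mathbb{Z},4)) = H_2(K(\mathbb{Z},4)) = 0$ by Hurewicz and the universal coefficient theorem.

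This isolates two sources that are not visibly zero: $E^{4,6}_2 = H^4(K(\mathbb{Z},4);\mathbb{Z}_2) \simeq \mathbb{Z}_2$, feeding $d_7$, and $E^{0,10}_2 = H^{10}(K(\mathbb{Z},3))$, feeding $d_{11}$; dealing with these is the heart of the argument. For the first I would show that $E^{4,6}$ dies before page $7$: the class in $E^{0,9}_2 \simeq \mathbb{Z}_2$ lies in total degree $9$ and so must vanish in the limit, but in the column $p=0$ its only nonzero outgoing differential is $d_4 : E_4^{0,9} \to E_4^{4,6}$, so this $d_4$ is injective, hence, being a homomorphism $\mathbb{Z}_2 \to \mathbb{Z}_2$, an isomorphism; therefore $E_5^{4,6} = 0$ and a fortiori $E_7^{4,6} = 0$. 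For the second source one uses the fibre value $H^{10}(K(\mathbb{Z},3)) = 0$, which makes $E^{0,10}_2 = 0$ and $d_{11}$ zero as well.

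With every incoming differential shown to be zero, $E^{11,0}$ is unaltered from the second page onward and coincides with its limit, yielding $H^{11}(K(\mathbb{Z},4),\mathbb{Z}) = 0$. The main obstacle is the bookkeeping around $E^{4,6}$: one must confirm that it is annihilated on an earlier page rather than surviving as the source of $d_7$, which rests on correctly identifying the fibre contributions $H^6$ and $H^9$ of $K(\mathbb{Z},3)$ and on the injectivity of the $d_4$ leaving the $p=0$ column. Pinning down $H^{10}(K(\mathbb{Z},3)) = 0$ is the only other input that demands care.
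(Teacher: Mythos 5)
Your reduction of the lemma to the vanishing of the incoming differentials $d_r\colon E_r^{11-r,\,r-1}\to E_r^{11,0}$ is exactly the argument the paper intends (its own proof is the one-line ``as in the $E^{10,0}$ lemma: everything in and out is zero''), and your handling of the source $E^{4,6}_2\simeq\mathbb{Z}_2$ via the forced isomorphism $d_4\colon E_4^{0,9}\to E_4^{4,6}$ is legitimate granting the paper's earlier lemmas. But the input you yourself flag as the remaining point of care is false: $H^{10}(K(\mathbb{Z},3);\mathbb{Z})\neq 0$. One has $H_9(K(\mathbb{Z},3);\mathbb{Z})\simeq\mathbb{Z}_2$ (this is the same computation that produces the values $H^8=\mathbb{Z}_3$, $H^9=\mathbb{Z}_2$ the paper does use), and $H_{10}(K(\mathbb{Z},3);\mathbb{Z})$ is finite, so by the universal coefficient theorem
$$H^{10}(K(\mathbb{Z},3);\mathbb{Z})\simeq Ext(H_{9}(K(\mathbb{Z},3)),\mathbb{Z})\simeq\mathbb{Z}_2 .$$
Hence $E_2^{0,10}\simeq\mathbb{Z}_2$, not $0$. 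The paper's table simply stops the fiber column at $q=9$, so its one-line proof tacitly makes the same false assumption; your write-up faithfully expands the paper's argument and in doing so exposes precisely where it breaks.

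The gap is fatal rather than cosmetic: by the same bookkeeping you carried out, every differential leaving $E^{0,10}$ other than $d_{11}\colon E_{11}^{0,10}\to E_{11}^{11,0}$ has target zero or a target that dies on an earlier page (the rows $q=1,2,4,5,7$ vanish, and $E^{8,3}$ is consumed by $d_4$, whose value on $\iota^2 a$ is the infinite-order class $\iota^3$). Since the total space is contractible, $E_\infty^{0,10}=0$ forces that last differential $d_{11}$ to be injective, so $E_2^{11,0}=H^{11}(K(\mathbb{Z},4),\mathbb{Z})$ must \emph{contain} a copy of $\mathbb{Z}_2$ rather than vanish: run with the corrected coefficient group, your argument refutes the lemma instead of proving it. (Indeed the established value is $H^{11}(K(\mathbb{Z},4);\mathbb{Z})\simeq\mathbb{Z}_2\oplus\mathbb{Z}_2$, one summand being $\iota\cup u$ with $u$ the $2$-torsion class in degree $7$, the other transgressed from $H^{10}$ of the fiber; related oversights --- in particular that $d_4(a^2)=\iota a-a\iota=0$ by graded commutativity, so $a^2$ survives to transgress under $d_7$ --- also undermine the neighboring lemmas of the paper.) To repair your proof one would first have to recompute $H^{q}(K(\mathbb{Z},3);\mathbb{Z})$ for $q\geq 10$ honestly and then redo the degree-$11$ column; no argument of the proposed shape can yield $E_2^{11,0}=0$.
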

\begin{proof}
As same as lemma (\ref{89})  .
\end{proof}
\begin{lem}
$E^{12,0} _{2} \simeq  \mathbb{Z}_{2}$
\end{lem}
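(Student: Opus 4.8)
The plan is to mimic the method already used for $E^{8,0}_2$ and $E^{9,0}_2$: exploit the contractibility of $PK(\mathbb{Z},4)$, which forces $E^{12,0}_\infty=0$, and then read off $E^{12,0}_2=H^{12}(K(\mathbb{Z},4),\mathbb{Z})$ from the differentials that hit it. Since $E^{12,0}_2$ lies on the bottom row, every differential leaving it lands in negative fibre degree and hence vanishes; consequently $E^{12,0}_r$ is a chain of successive quotients, and $E^{12,0}_\infty=0$ says precisely that the images of the incoming differentials jointly exhaust $H^{12}(K(\mathbb{Z},4),\mathbb{Z})$. So the whole problem reduces to locating the differentials into $E^{12,0}_{\,\cdot}$ and computing them.

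First I would extend the input data. The displayed $E_2$-page only records $H^q(K(\mathbb{Z},3))$ for $q\le 9$, but a differential $d_r\colon E^{12-r,\,r-1}_r\to E^{12,0}_r$ can originate from transgressive classes in $H^{10},H^{11},H^{12}$ of the fibre as well; in particular $d_{12}\colon E^{0,11}_{12}\to E^{12,0}_{12}$ is the transgression of $H^{11}(K(\mathbb{Z},3))$. I would therefore compute $H^{10},H^{11},H^{12}$ of $K(\mathbb{Z},3)$ by the same contractible-fibration argument applied to $\mathbb{CP}^\infty\to PK(\mathbb{Z},3)\to K(\mathbb{Z},3)$, and in parallel compute the mod-$2$ and mod-$3$ cohomology of $K(\mathbb{Z},4)$ in the relevant range via the Universal Coefficient theorem (Theorem \ref{2}) from the already-established groups $C^p:=H^p(K(\mathbb{Z},4),\mathbb{Z})$ with $p\le 11$. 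With these in hand I can list every source $E^{12-r,\,r-1}_2$, discard the zero ones, and reduce to a short list of candidate differentials; I expect the survivors to be a transgression together with the Leibniz image $d_4(\iota_4^{2}\,\iota_3)$ running along the $2$-torsion rows.

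The heart of the argument is multiplicativity. Using that $\{E^{p,q}_r,d_r\}$ is a differential graded algebra and that the fundamental transgression is $d_4(\iota_3)=\iota_4$, I would evaluate the relevant $d_r$ on products through the derivation rule $d_r(xy)=d_r(x)\,y+(-1)^{|x|}x\,d_r(y)$, reducing each value to a monomial in $\iota_4$ and the torsion generators of $H^*(K(\mathbb{Z},3))$. As in the lemma for $E^{8,0}_2$, the surviving map into $E^{12,0}_2$ should turn out to be multiplication by $2$ on its (infinite-cyclic or $2$-torsion) source, so that the exactness forced by $E^{12,0}_\infty=0$ exhibits $E^{12,0}_2$ as the cokernel $\mathbb{Z}/2\mathbb{Z}\simeq\mathbb{Z}_2$. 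As a consistency check I would determine $H^{12}(K(\mathbb{Z},4),\mathbb{Q})$ from the rational collapse of the same spectral sequence to confirm the free rank, and then use Theorem \ref{2} to separate the free and torsion parts.

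The main obstacle is exactly the bookkeeping of the $2$-primary part: several of the groups meeting total degree $12$ on the rows $q=3,6,9$ are copies of $\mathbb{Z}_2$ linked by $d_4$, and I must decide which of these maps are isomorphisms and which vanish before the differential into $E^{12,0}_2$ can be read off. Settling this requires a genuine hold on the multiplicative extension problem — equivalently, on the Steenrod-operation structure of $H^*(K(\mathbb{Z},3);\mathbb{Z}_2)$ — rather than on dimension counting alone. This is the step I expect to be delicate, and the one on which the precise value of $E^{12,0}_2$ ultimately hinges.
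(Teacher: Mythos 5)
There is a genuine, fatal problem here, and your own plan contains the instrument that exposes it: the rational consistency check you propose in passing. Since $H^{*}(K(\mathbb{Z},4);\mathbb{Q})\cong\mathbb{Q}[\iota_4]$ is polynomial on the even-degree fundamental class, $H^{12}(K(\mathbb{Z},4);\mathbb{Q})\cong\mathbb{Q}$, spanned by $\iota_4^{3}$; hence $H^{12}(K(\mathbb{Z},4),\mathbb{Z})$ has free rank one and cannot be $\mathbb{Z}_2$. The lemma is false, so no resolution of the $2$-primary bookkeeping you flag as delicate can close the argument. The step of your plan that concretely fails is taking as input the ``already-established groups $H^{p}(K(\mathbb{Z},4),\mathbb{Z})$, $p\le 11$'' from Section 4, in particular treating the total-degree-$12$ entries on rows $q=3,6,9$ as copies of $\mathbb{Z}_2$. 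The paper's value $H^{8}(K(\mathbb{Z},4),\mathbb{Z})\simeq\mathbb{Z}_2$ is itself wrong: its proof declares the map $\mathbb{Z}_2\simeq E^{0,6}_2\to E^{4,3}_2\simeq\mathbb{Z}$ injective, which is impossible for any homomorphism $\mathbb{Z}_2\to\mathbb{Z}$. In fact the Leibniz rule you invoke gives $d_4(\iota_3^{2})=\iota_4\iota_3-\iota_3\iota_4=0$, so $\iota_3^{2}$ survives and transgresses, yielding $H^{7}(K(\mathbb{Z},4),\mathbb{Z})\simeq\mathbb{Z}_2$, while $d_4(\iota_4\iota_3)=\iota_4^{2}$ forces $H^{8}(K(\mathbb{Z},4),\mathbb{Z})\simeq\mathbb{Z}\{\iota_4^{2}\}$ --- exactly the values Section 5 of the paper silently uses, contradicting Section 4. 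With the corrected input, the decisive differential into your target is $d_4\colon E^{8,3}_4\simeq\mathbb{Z}\{\iota_4^{2}\iota_3\}\to E^{12,0}_4$, $d_4(\iota_4^{2}\iota_3)=\iota_4^{3}$, an infinite cyclic image rather than ``multiplication by $2$ on a $2$-torsion source''; and since the other sources $E^{5,6}_2=H^{5}(K(\mathbb{Z},4);\mathbb{Z}_2)$, $E^{3,8}_2$, $E^{2,9}_2$, $E^{1,10}_2$ all vanish, convergence gives $H^{12}(K(\mathbb{Z},4),\mathbb{Z})\simeq\mathbb{Z}$, generated by $\iota_4^{3}$ (whose mod $2$ and mod $3$ reductions are nonzero, so it is not divisible).

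Two further remarks. First, even setting aside the false target, your text is a plan rather than a proof: the value of the key differential is only ``expected,'' and you explicitly defer the Steenrod-theoretic step on which you say the answer hinges; those computations would have to be carried out. Second, for comparison, the paper's own argument is far weaker than your outline: it posits, without derivation, an exact sequence $0\to\mathbb{Z}_2\to\mathbb{Z}_2\to\mathbb{Z}_2\to E^{12,0}_2\to 0$ ``from page 3'' with maps described as multiplication by $3$ and by $2$ (on $\mathbb{Z}_2$ these are the identity and zero), between groups that are artifacts of the erroneous $H^{8}$. Your machinery --- transgression, the derivation rule, and the rational collapse --- is the right one; executed honestly it does not prove the lemma but corrects it, giving $H^{12}(K(\mathbb{Z},4),\mathbb{Z})\simeq\mathbb{Z}$.
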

\begin{proof}
From page $3$, we have a long exact sequence as follow
$$0 \longrightarrow \mathbb{Z} _{2}  \longrightarrow   \mathbb{Z}_2    \longrightarrow   \mathbb{Z}_2  \longrightarrow E^{12,0} _{2} \longrightarrow 0$$
Which the left hand differential map is multiplicable by $3$, so respectively injective and the middle map is multiplicable by $2$, then the right hand map should be an isomorphism.
\end{proof}
 At last we have following results:\\
\begin{cor}
$$E^{1,0} _{2} =H^{1} (K(\mathbb{Z},4) ,H^{0} (K(\mathbb{Z},3),\mathbb{Z}))= H^{1} (K(\mathbb{Z},4) ,\mathbb{Z}) \simeq 0$$
$$E^{2,0} _{2} =H^{2} (K(\mathbb{Z},4) ,H^{0} (K(\mathbb{Z},3),\mathbb{Z}))= H^{2} (K(\mathbb{Z},4) ,\mathbb{Z}) \simeq 0$$
$$E^{3,0} _{2} =H^{3} (K(\mathbb{Z},4) ,H^{0} (K(\mathbb{Z},3),\mathbb{Z}))= H^{3} (K(\mathbb{Z},4) ,\mathbb{Z}) \simeq 0$$
$$E^{4,0} _{2} =H^{4} (K(\mathbb{Z},4) ,H^{0} (K(\mathbb{Z},3),\mathbb{Z}))= H^{4} (K(\mathbb{Z},4) ,\mathbb{Z}) \simeq \mathbb{Z}$$
$$E^{5,0} _{2} =H^{5} (K(\mathbb{Z},4) ,H^{0} (K(\mathbb{Z},3),\mathbb{Z}))= H^{5} (K(\mathbb{Z},4) ,\mathbb{Z}) \simeq 0$$
$$E^{6,0} _{2} =H^{6} (K(\mathbb{Z},4) ,H^{0} (K(\mathbb{Z},3),\mathbb{Z}))= H^{6} (K(\mathbb{Z},4) ,\mathbb{Z}) \simeq 0$$
$$E^{7,0} _{2} =H^{7} (K(\mathbb{Z},4) ,H^{0} (K(\mathbb{Z},3),\mathbb{Z}))= H^{7} (K(\mathbb{Z},4) ,\mathbb{Z}) \simeq 0$$
$$E^{8,0} _{2} =H^{8} (K(\mathbb{Z},4) ,H^{0} (K(\mathbb{Z},3),\mathbb{Z}))= H^{8} (K(\mathbb{Z},4) ,\mathbb{Z}) \simeq \mathbb{Z}_{2}$$
$$E^{9,0} _{2} =H^{9} (K(\mathbb{Z},4) ,H^{0} (K(\mathbb{Z},3),\mathbb{Z}))= H^{9} (K(\mathbb{Z},4) ,\mathbb{Z}) \simeq \mathbb{Z}_{3}$$
$$E^{10,0} _{2} =H^{10} (K(\mathbb{Z},4) ,H^{0} (K(\mathbb{Z},3),\mathbb{Z}))= H^{10} (K(\mathbb{Z},4) ,\mathbb{Z}) \simeq 0$$
$$E^{11,0} _{2} =H^{11} (K(\mathbb{Z},4) ,H^{0} (K(\mathbb{Z},3),\mathbb{Z}))= H^{11} (K(\mathbb{Z},4) ,\mathbb{Z}) \simeq 0$$
$$E^{12,0} _{2} =H^{12} (K(\mathbb{Z},4) ,H^{0} (K(\mathbb{Z},3),\mathbb{Z}))= H^{12} (K(\mathbb{Z},4) ,\mathbb{Z}) \simeq \mathbb{Z}_2$$
\end{cor}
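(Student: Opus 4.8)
The plan is to read off each of the twelve isomorphisms from results already in hand, so that the corollary is essentially a bookkeeping statement resting on one structural observation. The observation I would isolate first is the identification of the bottom row of the $E_2$-page with the cohomology of the base: since $K(\mathbb{Z},3)$ is path-connected we have $H^0(K(\mathbb{Z},3);\mathbb{Z}) \cong \mathbb{Z}$, and hence by part (3) of the Spectral Sequence Theorem
\[
E_2^{p,0} = H^p\bigl(K(\mathbb{Z},4);\, H^0(K(\mathbb{Z},3);\mathbb{Z})\bigr) \cong H^p(K(\mathbb{Z},4);\mathbb{Z})
\]
for every $p$. Thus each line of the corollary reduces to computing the integral cohomology group $H^p(K(\mathbb{Z},4);\mathbb{Z})$ in the stated degree.

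For $p = 1,2,3$ I would invoke the $3$-connectivity of $K(\mathbb{Z},4)$: Hurewicz (Theorem \ref{1}) gives $H_1 = H_2 = H_3 = 0$, and the Universal Coefficient Theorem (Theorem \ref{2}) then forces $H^1 = H^2 = H^3 = 0$. For $p=4$, Hurewicz gives $H_4 \cong \pi_4(K(\mathbb{Z},4)) \cong \mathbb{Z}$ and the Universal Coefficient Theorem yields $H^4 \cong \mathbb{Z}$; these are exactly the computations carried out at the opening of this section. For $5 \le p \le 12$ I would simply cite the corresponding lemmas proved above, which supply $E_2^{5,0}=E_2^{6,0}=E_2^{7,0}=0$, $E_2^{8,0}\cong\mathbb{Z}_2$, $E_2^{9,0}\cong\mathbb{Z}_3$, $E_2^{10,0}=E_2^{11,0}=0$, and $E_2^{12,0}\cong\mathbb{Z}_2$. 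Combining these with the displayed identification completes the list.

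The only genuine content hides in the torsion degrees $p=8,9,12$. Since the bottom row admits no nonzero outgoing differential (the target $E_r^{p+r,\,1-r}$ vanishes for $r\ge 2$), each $E_2^{p,0}$ with $p>0$ must be annihilated entirely by incoming differentials in order to reach $E_\infty^{p,0}=0$, as dictated by the contractibility of $PK(\mathbb{Z},4)$. I expect the main point to verify—already discharged in the lemmas but worth re-checking when assembling the corollary—to be that the incoming transgressing differentials hitting columns $8$, $9$ and $12$ are correctly identified as the relevant multiplications by integers, and that the resulting short exact sequences pin down the quoted torsion groups with no residual extension ambiguity. Because the displayed $E_2$-page is concentrated in the columns $p\in\{0,4,8,9,12,\dots\}$ where $H^p(K(\mathbb{Z},4);\mathbb{Z})$ is nonzero, this column-sparsity guarantees there are no competing differentials beyond those already analysed, so no further work is needed to read the corollary off from the lemmas.
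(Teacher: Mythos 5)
Your overall architecture faithfully reconstructs what the paper actually does: the corollary carries no independent proof there, being a compilation of the opening Hurewicz/universal-coefficient computations (your $p\le 4$ part, which is sound) together with the preceding lemmas for $5\le p\le 12$, using exactly the identification $E_2^{p,0}\cong H^p(K(\mathbb{Z},4);\mathbb{Z})$ that you isolate. However, the point at which you write that the torsion degrees are ``already discharged in the lemmas'' and that ``no further work is needed'' is precisely where a genuine gap sits, and your own suggested re-check, if actually carried out, would fail rather than confirm the entries. The paper's lemma for $E_2^{8,0}$ rests on a purported short exact sequence $0\to\mathbb{Z}_2\simeq E_2^{0,6}\to E_2^{4,3}\simeq\mathbb{Z}\to E_2^{8,0}\to 0$ with the left map injective; no injective homomorphism $\mathbb{Z}_2\to\mathbb{Z}$ exists, so that sequence is impossible. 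A correct check shows $d_4\colon E_4^{0,6}\to E_4^{4,3}$ is necessarily zero, hence the class in $E^{0,6}\simeq\mathbb{Z}_2$ survives to transgress by $d_7$ onto $E_7^{7,0}$, forcing $H^7(K(\mathbb{Z},4);\mathbb{Z})\simeq\mathbb{Z}_2$ (contradicting line $7$ of the corollary), while $d_4\colon E_4^{4,3}\simeq\mathbb{Z}\to E_4^{8,0}$ must then be an isomorphism, forcing $H^8(K(\mathbb{Z},4);\mathbb{Z})\simeq\mathbb{Z}$ (contradicting line $8$). Note that Section~5 of the same paper uses exactly these corrected values ($H^7\simeq\mathbb{Z}_2$, $H^8\simeq\mathbb{Z}$, $H^9\simeq\mathbb{Z}_3$) as input for the $K(\mathbb{Z},5)$ computation, so the corollary is inconsistent with the paper's own later table.

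Your auxiliary claim that the column-sparsity of the displayed page guarantees no competing differentials inherits a second defect: the page as drawn omits the fiber rows $q\ge 10$ (e.g.\ $H^{10}(K(\mathbb{Z},3);\mathbb{Z})\simeq\mathbb{Z}_2$), whose transgressions land in columns $10$ through $12$; once these are included, the entries claimed for $p=10,11,12$ also do not survive scrutiny (for instance the $\mathbb{Z}_2$ in $E^{0,9}$ must kill $E^{10,0}$ by $d_{10}$, giving $H^{10}\simeq\mathbb{Z}_2$ rather than $0$). So while your proposal is structurally the same compilation argument as the paper's, the corollary cannot be established this way because several of the cited lemmas are themselves unsound; a correct execution of your transgression analysis, degree by degree and with the full fiber cohomology, is the repair, and it yields different values in degrees $7$, $8$, and $10$--$12$.
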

\section{Cohomology  Groups of $K(\mathbb{Z},5)$ Via Fibration}
Now consider base space $X=K(\mathbb{Z},5)$, then 
$$K(\mathbb{Z} ,4) = \Omega K(\mathbb{Z},5) \longrightarrow  PK(\mathbb{Z},5) \longrightarrow K(\mathbb{Z},5)$$

Here $K(\mathbb{Z} ,5)$ is ($4$ - connected), and as we showed above, we obtain the following results
$$H_{1}(K(\mathbb{Z},5),\mathbb{Z}) \simeq H_{2}(K(\mathbb{Z},5),\mathbb{Z}) \simeq H_{3}(K(\mathbb{Z},5),\mathbb{Z}) \simeq H_{4}(K(\mathbb{Z},5),\mathbb{Z}) \simeq 0 $$
$$H^{1}(K(\mathbb{Z},5),\mathbb{Z}) \simeq H^{2}(K(\mathbb{Z},5),\mathbb{Z}) \simeq H^{3}(K(\mathbb{Z},5),\mathbb{Z})\simeq H^{4}(K(\mathbb{Z},5),\mathbb{Z}) \simeq 0$$
$$\pi_{5}(K(\mathbb{Z},5)) \simeq H_{5}(K(\mathbb{Z},5),\mathbb{Z}) \simeq \mathbb{Z}$$
and
$$
H^{5}(K(\mathbb{Z},5),\mathbb{Z}) \simeq Ext(H_{4} (K(\mathbb{Z},5),\mathbb{Z})) \oplus Hom(H_{5}(K(\mathbb{Z},5),\mathbb{Z})) 
\simeq \mathbb{Z}.
$$
Now it is time to calculate higher cohomology groups and respectively homology groups of $ K(\mathbb{Z},5)$. By setting  
$$E^{p,q} _{2} := H^{p}(K(\mathbb{Z},5);H^{q} (K(\mathbb{Z},4))$$
there are following results which are proved.
$$E^{0,0} _{2} =H^{0} (K(\mathbb{Z},5) ,H^{0} (K(\mathbb{Z},4),\mathbb{Z}))= H^{0} (K(\mathbb{Z},5) ,\mathbb{Z}) \simeq \mathbb{Z}$$
$$E^{0,1} _{2} =H^{0} (K(\mathbb{Z},5) ,H^{1} (K(\mathbb{Z},4),\mathbb{Z}))= H^{0} (K(\mathbb{Z},5) ,0) \simeq 0$$
$$E^{0,2} _{2} =H^{0} (K(\mathbb{Z},5) ,H^{2} (K(\mathbb{Z},4),\mathbb{Z}))= H^{0} (K(\mathbb{Z},5) ,0) \simeq 0$$
$$E^{0,3} _{2} =H^{0} (K(\mathbb{Z},5) ,H^{3} (K(\mathbb{Z},4),\mathbb{Z}))= H^{0} (K(\mathbb{Z},5) ,0) \simeq 0$$
$$E^{0,4} _{2} =H^{0} (K(\mathbb{Z},5) ,H^{4} (K(\mathbb{Z},4),\mathbb{Z}))= H^{0} (K(\mathbb{Z},5) , \mathbb{Z}) \simeq \mathbb{Z}$$
$$E^{0,5} _{2} =H^{0} (K(\mathbb{Z},5) ,H^{5} (K(\mathbb{Z},4),\mathbb{Z}))= H^{0} (K(\mathbb{Z},5) ,0) \simeq 0$$
$$E^{0,6} _{2} =H^{0} (K(\mathbb{Z},5) ,H^{6} (K(\mathbb{Z},4),\mathbb{Z}))= H^{0} (K(\mathbb{Z},5) ,0) \simeq 0$$
$$E^{0,7} _{2} =H^{0} (K(\mathbb{Z},5) ,H^{7} (K(\mathbb{Z},4),\mathbb{Z}))= H^{0} (K(\mathbb{Z},5) , \mathbb{Z}_{2}) \simeq \mathbb{Z}_{2}$$
$$E^{0,8} _{2} =H^{0} (K(\mathbb{Z},5) ,H^{8} (K(\mathbb{Z},4),\mathbb{Z}))= H^{0} (K(\mathbb{Z},5) ,\mathbb{Z}) \simeq \mathbb{Z}$$
$$E^{0,9} _{2} =H^{0} (K(\mathbb{Z},5) ,H^{9} (K(\mathbb{Z},4),\mathbb{Z}))= H^{0} (K(\mathbb{Z},5) ,\mathbb{Z}_{3}) \simeq \mathbb{Z}_{3}$$\\
and 
$$E^{5,0} _{2} =H^{5} (K(\mathbb{Z},5) ,H^{0} (K(\mathbb{Z},4),\mathbb{Z}))= H^{5} (K(\mathbb{Z},5) ,\mathbb{Z}) \simeq \mathbb{Z}$$
$$E^{5,1} _{2} =H^{5} (K(\mathbb{Z},5) ,H^{1} (K(\mathbb{Z},4),\mathbb{Z}))= H^{5} (K(\mathbb{Z},5) ,0) \simeq 0$$
$$E^{5,2} _{2} =H^{5} (K(\mathbb{Z},5) ,H^{2} (K(\mathbb{Z},4),\mathbb{Z}))= H^{5} (K(\mathbb{Z},5) ,0) \simeq 0$$
$$E^{5,3} _{2} =H^{5} (K(\mathbb{Z},5) ,H^{3} (K(\mathbb{Z},4),\mathbb{Z}))= H^{5} (K(\mathbb{Z},5) ,0) \simeq 0$$
$$E^{5,4} _{2} =H^{5} (K(\mathbb{Z},5) ,H^{4} (K(\mathbb{Z},4),\mathbb{Z}))= H^{5} (K(\mathbb{Z},5) , \mathbb{Z}) \simeq \mathbb{Z}$$
$$E^{5,5} _{2} =H^{5} (K(\mathbb{Z},5) ,H^{5} (K(\mathbb{Z},4),\mathbb{Z}))= H^{5} (K(\mathbb{Z},5) ,0) \simeq 0$$
$$E^{5,6} _{2} =H^{5} (K(\mathbb{Z},5) ,H^{6} (K(\mathbb{Z},4),\mathbb{Z}))= H^{5} (K(\mathbb{Z},5) ,0) \simeq 0$$
$$E^{5,7} _{2} =H^{5} (K(\mathbb{Z},5) ,H^{7} (K(\mathbb{Z},4),\mathbb{Z}))= H^{5} (K(\mathbb{Z},5) , \mathbb{Z}_{2}) \simeq 0$$
$$E^{5,8} _{2} =H^{5} (K(\mathbb{Z},5) ,H^{8} (K(\mathbb{Z},4),\mathbb{Z}))= H^{5} (K(\mathbb{Z},5) ,\mathbb{Z}) \simeq \mathbb{Z}_2$$
$$E^{5,9} _{2} =H^{5} (K(\mathbb{Z},5) ,H^{9} (K(\mathbb{Z},4),\mathbb{Z}))= H^{5} (K(\mathbb{Z},4) ,\mathbb{Z}_{3}) \simeq \mathbb{Z}_{3}$$
$$E^{5,10} _{2} =H^{5} (K(\mathbb{Z},5) ,H^{10} (K(\mathbb{Z},4),\mathbb{Z}))= H^{5} (K(\mathbb{Z},5) ,0) \simeq 0$$
$$E^{5,11} _{2} =H^{5} (K(\mathbb{Z},5) ,H^{11} (K(\mathbb{Z},4),\mathbb{Z}))= H^{5} (K(\mathbb{Z},5) ,0) \simeq 0$$\\
\begin{lem}
$$E_{2}^{1, q} = E_{2}^{2, q}=E_{2}^{3, q}=E_{2}^{4, q}=0$$
\end{lem}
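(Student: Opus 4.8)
The plan is to reduce the claim to a statement about the cohomology of the base $K(\mathbb{Z},5)$ with arbitrary coefficients, and then invoke the universal coefficient theorem together with the high connectivity of the base. By property (3) of the cohomology spectral sequence we have $E_2^{p,q} = H^p(K(\mathbb{Z},5); H^q(K(\mathbb{Z},4)))$, so for each fixed $q$ the coefficient group $A := H^q(K(\mathbb{Z},4))$ is simply some abelian group, and it suffices to show that $H^p(K(\mathbb{Z},5);A) = 0$ for $p = 1,2,3,4$ and for every abelian group $A$.

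First I would record the homology of the base in low degrees. Since $K(\mathbb{Z},5)$ is $4$-connected (Definition \ref{14}), the Hurewicz theorem (Theorem \ref{1}) gives $H_i(K(\mathbb{Z},5),\mathbb{Z}) = 0$ for $1 \le i \le 4$, while $H_0(K(\mathbb{Z},5),\mathbb{Z}) \simeq \mathbb{Z}$ by path-connectedness.

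Next I would feed these vanishing groups into the universal coefficient sequence (Theorem \ref{2}) taken with the coefficient group $A$:
\begin{equation*}
0 \longrightarrow Ext(H_{p-1}(K(\mathbb{Z},5)),A) \longrightarrow H^p(K(\mathbb{Z},5);A) \longrightarrow Hom(H_p(K(\mathbb{Z},5)),A) \longrightarrow 0.
\end{equation*}
For $p = 2,3,4$ both outer terms vanish, since $H_{p-1}$ and $H_p$ are zero in this range, forcing the middle term to be zero. For $p = 1$ the $Hom$ term vanishes because $H_1 = 0$; and although $H_0 \simeq \mathbb{Z}$ is nonzero, the term $Ext(\mathbb{Z},A)$ is zero because $\mathbb{Z}$ is free, so $H^1(K(\mathbb{Z},5);A) = 0$ as well. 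Running this over all $q$ then yields $E_2^{1,q} = E_2^{2,q} = E_2^{3,q} = E_2^{4,q} = 0$, as claimed.

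The argument is essentially routine, and the only point requiring genuine care is the $p=1$ column: here one must not mistakenly appeal to vanishing of $H_0$ (which is \emph{false}, as $H_0 \simeq \mathbb{Z}$), but instead rely on the freeness of $\mathbb{Z}$ to kill the $Ext$ term. The main (and very mild) obstacle is simply checking that the universal coefficient theorem is being applied with the arbitrary coefficient group $A = H^q(K(\mathbb{Z},4))$ rather than with $\mathbb{Z}$; this is immediate from the statement of Theorem \ref{2}, which holds for any coefficient group.
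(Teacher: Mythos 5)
Your proof is correct and is essentially the calculation the paper gestures at: the paper's entire proof is the assertion ``By straight calculation the proof is obvious,'' and your Hurewicz-plus-universal-coefficients argument, applied uniformly with the arbitrary coefficient group $A = H^q(K(\mathbb{Z},4))$, is precisely that straight calculation carried out in full. You also correctly handle the one point of substance, namely that for $p=1$ the vanishing comes not from $H_0$ (which is $\mathbb{Z}$) but from $Ext(\mathbb{Z},A)=0$ by freeness of $\mathbb{Z}$.
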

\begin{proof}
By straight calculation the proof is obvious.
\end{proof}
By considering page $2$ as follow,
\begin{center}
\begin{figure}
\begin{tikzpicture}[scale=.15]
\draw[->] (0,-60)--(60,-60);
\draw[->] (0,-60)--(0,0);
\node at (2,-57){$\mathbb{Z}$};
\node at (2,-52){$0$};
\node at (2,-47){$0$};
\node at (2,-42){$0$};
\node at (2,-37){$\mathbb{Z}$};
\node at (2,-32){$0$};
\node at (2,-27){$0$};
\node at (2,-22){$0$};
\node at (2,-17){$\mathbb{Z}_{2}$};
\node at (2,-12){$\mathbb{Z}_{3}$};
\node at (7,-57){$0$};
\node at (7,-52){$0$};
\node at (7,-47){$0$};
\node at (7,-42){$0$};
\node at (7,-37){$0$};
\node at (7,-32){$0$};
\node at (7,-27){$0$};
\node at (7,-22){$0$};
\node at (7,-17){$0$};
\node at (7,-12){$0$};
\node at (7,-7){$0$};
\node at (12,-57){$0$};
\node at (12,-52){$0$};
\node at (12,-47){$0$};
\node at (12,-42){$0$};
\node at (12,-37){$0$};
\node at (12,-32){$0$};
\node at (12,-27){$0$};
\node at (12,-22){$0$};
\node at (12,-17){$0$};
\node at (12,-12){$0$};
\node at (12,-7){$0$};
\node at (17,-57){$0$};
\node at (17,-52){$0$};
\node at (17,-47){$0$};
\node at (17,-42){$0$};
\node at (17,-37){$0$};
\node at (17,-32){$0$};
\node at (17,-27){$0$};
\node at (17,-22){$0$};
\node at (17,-17){$0$};
\node at (17,-12){$0$};
\node at (17,-7){$0$};;
\node at (22,-57){$0$};
\node at (22,-52){$0$};
\node at (22,-47){$0$};
\node at (22,-42){$0$};
\node at (22,-37){$0$};
\node at (22,-32){$0$};
\node at (22,-27){$0$};
\node at (22,-22){$0$};
\node at (22,-17){$0$};
\node at (22,-12){$0$};
\node at (27,-57){$\mathbb{Z}$};
\node at (27,-52){$0$};
\node at (27,-47){$0$};
\node at (27,-42){$0$};
\node at (27,-37){$\mathbb{Z}$};
\node at (27,-32){$0$};
\node at (27,-27){$0$};
\node at (27,-22){$\mathbb{Z}_{2}$};
\node at (27,-17){$\mathbb{Z}$};
\node at (27,-12){$\mathbb{Z}_{3}$};
\node at (27,-7){$...$};
\node at (32,-57){$E^{6,0} _{2}$};
\node at (32,-52){$0$};
\node at (32,-47){$0$};
\node at (32,-42){$0$};
\node at (32,-37){$E^{6,4} _{2}$};
\node at (32,-32){$0$};
\node at (32,-27){$0$};
\node at (32,-22){$0$};
\node at (32,-17){$E^{6,8} _{2}$};
\node at (32,-12){$0$};
\node at (32,-7){$...$};
\node at (38,-57){$E^{7,0} _{2}$};
\node at (38,-52){$0$};
\node at (38,-47){$0$};
\node at (38,-42){$0$};
\node at (38,-37){$E^{7,4} _{2}$};
\node at (38,-32){$0$};
\node at (38,-27){$0$};
\node at (38,-22){$0$};
\node at (38,-17){$E^{7,8} _{2}$};
\node at (38,-12){$0$};
\node at (38,-7){$...$};
\node at (43,-57){$E^{8,0} _{2}$};
\node at (43,-52){$0$};
\node at (43,-47){$0$};
\node at (43,-42){$0$};
\node at (43,-37){$E^{8,4} _{2}$};
\node at (43,-32){$0$};
\node at (43,-27){$0$};
\node at (43,-22){$E^{8,7} _{2}$};
\node at (43,-17){$E^{8,8} _{2}$};
\node at (43,-12){$..$};
\node at (43,-7){$...$};
\node at (48,-57){$E^{9,0} _{2}$};
\node at (48,-52){$0$};
\node at (48,-47){$0$};
\node at (48,-42){$0$};
\node at (48,-37){$E^{9,4} _{2}$};
\node at (48,-32){$0$};
\node at (48,-27){$0$};
\node at (48,-22){$E^{9,7} _{2}$};
\node at (48,-17){$E^{9,8} _{2}$};
\node at (48,-12){$..$};
\node at (48,-7){$...$};
\node at (53,-57){$E^{10,0} _{2}$};
\node at (53,-52){$0$};
\node at (53,-47){$0$};
\node at (53,-42){$0$};
\node at (53,-37){$E^{10,4} _{2}$};
\node at (53,-32){$0$};
\node at (53,-27){$0$};
\node at (53,-22){$E^{10,7} _{2}$};
\draw [->] (4,-18)--(25,-35);
\draw [->] (27,-38)--(52,-56);
\draw [->] (3,-18)--(47,-56);
\draw [->] (4,-23)--(41,-56);
\draw [->] (3,-37)--(27,-56);
\end{tikzpicture}
\caption{page $2$}
\end{figure}
\end{center}
\begin{lem}
$E^{6,0} _{2} = H^{5} (K(\mathbb{Z},5),\mathbb{Z}) =0$
\end{lem}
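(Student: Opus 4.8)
The plan is to argue exactly as in the computation of $E^{5,0}_2$ for $K(\mathbb{Z},4)$: I would show that the bottom-row entry $E^{6,0}_2$ supports no nonzero incoming or outgoing differential, so that it survives unchanged to the $E_\infty$ page, where the contractibility of $PK(\mathbb{Z},5)$ forces it to vanish. First I would record the identification
$$E^{6,0}_2 = H^6\bigl(K(\mathbb{Z},5);H^0(K(\mathbb{Z},4),\mathbb{Z})\bigr) = H^6(K(\mathbb{Z},5),\mathbb{Z}),$$
so that proving $E^{6,0}_2 = 0$ is the same as showing this cohomology group is trivial.

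Next I would dispose of the outgoing differentials. For $r \ge 2$ the map $d_r$ sends $E^{6,0}_r$ to $E^{6+r,\,1-r}_r$, whose second index $1-r$ is negative; every such target is zero, so no differential leaves $E^{6,0}_r$ and this entry is a permanent cycle on every page.

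The incoming differentials are where one must be careful. The relevant map is $d_r \colon E^{6-r,\,r-1}_r \to E^{6,0}_r$. For $r = 2,3,4,5$ the source lies in the columns $p = 4,3,2,1$ respectively, all of which vanish on page $2$ by the preceding lemma (and hence on every later page). For $r = 6$ the source is $E^{0,5}_2 = H^0\bigl(K(\mathbb{Z},5);H^5(K(\mathbb{Z},4),\mathbb{Z})\bigr)$, which is zero because $H^5(K(\mathbb{Z},4),\mathbb{Z}) = 0$. For $r \ge 7$ the source has negative $p$-coordinate and is therefore zero. Thus no nonzero differential can hit $E^{6,0}$.

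Combining the two observations gives $E^{6,0}_2 = E^{6,0}_3 = \cdots = E^{6,0}_\infty$. Since $PK(\mathbb{Z},5)$ is contractible, $E^{p,q}_\infty \Rightarrow H^{p+q}(PK(\mathbb{Z},5),\mathbb{Z}) = 0$ for $(p,q)\neq(0,0)$, so $E^{6,0}_\infty = 0$ and hence $E^{6,0}_2 = H^6(K(\mathbb{Z},5),\mathbb{Z}) = 0$. The only genuine obstacle is the bookkeeping of the $d_6$ differential out of $E^{0,5}$: one must invoke the vanishing of $H^5(K(\mathbb{Z},4),\mathbb{Z})$ established in Section $4$ rather than treat that entry as automatically zero, since it is the single incoming arrow whose source is not killed by the preceding lemma.
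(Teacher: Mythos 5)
Your proposal is correct and takes essentially the same approach as the paper: the bottom-row entry $E^{6,0}$ supports no nonzero incoming or outgoing differentials, survives to $E_\infty$, and must vanish by contractibility of $PK(\mathbb{Z},5)$; you merely make explicit the case-by-case check (in particular the $d_6$ from $E^{0,5}_2$, zero because $H^5(K(\mathbb{Z},4),\mathbb{Z})=0$) that the paper's one-line proof leaves implicit. You also rightly read the lemma's ``$H^5$'' as a typo for $H^6(K(\mathbb{Z},5),\mathbb{Z})$, consistent with the paper's own concluding line.
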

\begin{proof}
Each differential map which come and go out $E^{6,0} _{2}$ are zero. Then $E^{6,0} _{2}$ will remain to page infinity and converg to $E^{6,0} _{\infty} =0$, therefore 
$$E^{6,0} _{2} = H^{6} (K(\mathbb{Z},5),\mathbb{Z}) =0$$
\end{proof}
\begin{lem}
$E^{7,0} _{2} = H^{7} (K(\mathbb{Z},5),\mathbb{Z})=0$
\end{lem}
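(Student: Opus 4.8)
The plan is to argue exactly as in the preceding lemma for $E^{6,0}_2$: I would show that the slot $E^{7,0}_2$ is never touched by any differential, so that it survives unchanged all the way to the $E_\infty$-page, where the contractibility of $PK(\mathbb{Z},5)$ forces it to vanish. First I would record the two facts that drive the whole computation. Since $PK(\mathbb{Z},5)$ is contractible, the spectral sequence converges to $H^{p+q}(PK(\mathbb{Z},5),\mathbb{Z})=0$ for $p+q>0$, so in particular $E^{7,0}_\infty = 0$. Moreover, by part $(3)$ of the spectral-sequence theorem together with $H^0(K(\mathbb{Z},4),\mathbb{Z})\simeq\mathbb{Z}$, we have $E^{7,0}_2 = H^7\bigl(K(\mathbb{Z},5);H^0(K(\mathbb{Z},4))\bigr)\simeq H^7(K(\mathbb{Z},5),\mathbb{Z})$, which is precisely the group to be computed.

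Next I would check that no differential lands in $E^{7,0}_r$. For $r\ge 2$ the differential $d_r$ into this slot originates at $E^{7-r,\,r-1}_r$; as $r$ runs from $2$ to $7$ these sit at the $E_2$-positions $(5,1),(4,2),(3,3),(2,4),(1,5),(0,6)$, all lying on the anti-diagonal $p+q=6$. Each of these vanishes on the second page: the positions with $1\le p\le 4$ are zero by the preceding lemma $E^{1,q}_2=E^{2,q}_2=E^{3,q}_2=E^{4,q}_2=0$, the position $(5,1)$ is zero because the fibre group $H^1(K(\mathbb{Z},4),\mathbb{Z})=0$, and $(0,6)\simeq H^6(K(\mathbb{Z},4),\mathbb{Z})=0$. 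Since any subquotient of a zero group is zero, all of these remain zero on every later page $E_r$, and hence every incoming differential into $E^{7,0}_r$ is the zero map.

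I would then dispose of the outgoing differentials at once: for $r\ge 2$ the map $d_r$ leaving $E^{7,0}_r$ has target $E^{7+r,\,1-r}_r$, whose fibre index $1-r$ is negative, so the target is zero. Thus no differential enters or leaves $E^{7,0}_r$ at any stage, and the passage to cohomology $E^{p,q}_{r+1}=\ker d_r/\operatorname{im}d_r$ leaves this entry fixed, giving $E^{7,0}_2=E^{7,0}_3=\cdots=E^{7,0}_\infty$. Combining this with $E^{7,0}_\infty=0$ yields $E^{7,0}_2 = H^7(K(\mathbb{Z},5),\mathbb{Z})=0$.

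There is essentially no hard step here; this is a ``lack of room'' argument, and the only point requiring care is the bookkeeping of the anti-diagonal $p+q=6$ that feeds into the position $(7,0)$. The one thing to be attentive about is confirming that every entry on that anti-diagonal is zero, which is exactly what the earlier vanishing lemma and the known low-degree cohomology of the fibre $K(\mathbb{Z},4)$ supply, so that the isomorphism $E^{7,0}_2\simeq E^{7,0}_\infty$ is guaranteed.
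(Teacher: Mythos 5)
Your proposal is correct and is essentially the paper's own argument, made explicit: the paper's proof simply says ``following the same reason'' as the $E^{6,0}_2$ lemma, namely that every differential into or out of the $(7,0)$ slot vanishes (incoming sources on the anti-diagonal $p+q=6$ are zero, outgoing targets have negative fibre degree), so the entry survives to $E_\infty$, where contractibility of $PK(\mathbb{Z},5)$ forces it to be zero. Your careful bookkeeping of the sources $(5,1),(4,2),(3,3),(2,4),(1,5),(0,6)$ is exactly the verification the paper leaves implicit.
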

\begin{proof}
Similarly, following the same reason $E^{7,0} _{2} = H^{7} (K(\mathbb{Z},5),\mathbb{Z})=0$.
\end{proof}
\begin{lem} \label{aa}
$E^{8,0} _{2} = H^{8} (K(\mathbb{Z},5),\mathbb{Z}) \simeq 0$
\end{lem}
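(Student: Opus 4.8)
The plan is to reuse the ``nothing can reach the corner'' argument of the preceding lemmas, now for the entry $E^{8,0}_2$, which sits in total degree $8$. Because $PK(\mathbb{Z},5)$ is contractible the whole spectral sequence converges to $0$, so it is enough to check that $E^{8,0}$ is hit by no differential and maps out by no differential; it then survives unchanged to $E_\infty$, where it must be trivial.

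First I would dispose of the outgoing differentials. For every $r\ge 2$ the map $d_r\colon E^{8,0}_r \to E^{8+r,\,1-r}_r$ has target indexed by the negative number $1-r$ in the fibre direction, so it is zero; nothing leaves $E^{8,0}$.

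Next come the incoming differentials $d_r\colon E^{8-r,\,r-1}_r \to E^{8,0}_r$ for $2\le r\le 8$, whose possible sources on the $E_2$-page are
$$E^{6,1}_2,\ E^{5,2}_2,\ E^{4,3}_2,\ E^{3,4}_2,\ E^{2,5}_2,\ E^{1,6}_2,\ E^{0,7}_2.$$
Of these, $E^{4,3}_2$, $E^{3,4}_2$, $E^{2,5}_2$ and $E^{1,6}_2$ vanish because the columns $p=1,2,3,4$ are identically zero by the lemma giving $E_2^{p,q}=0$ for $1\le p\le 4$, while $E^{6,1}_2$ and $E^{5,2}_2$ vanish because their fibre degrees $q=1,2$ yield $H^q(K(\mathbb{Z},4))=0$. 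The only subtle source is the transgression-type term $E^{0,7}_2 = H^0(K(\mathbb{Z},5);H^7(K(\mathbb{Z},4)))$ feeding the differential $d_8$; here I would appeal to the computation of Section 4, which gives $H^7(K(\mathbb{Z},4))=0$, so that $E^{0,7}_2=0$ and the potential $d_8\colon E^{0,7}_8\to E^{8,0}_8$ has trivial source.

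Having shown that every differential into and out of $E^{8,0}$ vanishes, I would conclude $E^{8,0}_2=E^{8,0}_\infty=0$, that is $H^8(K(\mathbb{Z},5),\mathbb{Z})\simeq 0$. The one genuinely load-bearing input is the vanishing $H^7(K(\mathbb{Z},4))=0$; this is exactly the fibre-degree-$7$ entry and is the step I would scrutinize most carefully, since a nonzero group there would let $d_8$ survive and instead force $H^8(K(\mathbb{Z},5))\cong\mathbb{Z}_2$.
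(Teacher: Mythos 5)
There is a genuine gap, and it sits exactly where you said you would scrutinize most carefully: the input $H^{7}(K(\mathbb{Z},4),\mathbb{Z})=0$ that you import from the Section~4 corollary is false, and with it the lemma's stated conclusion. In the spectral sequence of $K(\mathbb{Z},3)\to PK(\mathbb{Z},4)\to K(\mathbb{Z},4)$, the class $E^{0,6}_{2}=H^{6}(K(\mathbb{Z},3),\mathbb{Z})\simeq\mathbb{Z}_{2}$ cannot inject into $E^{4,3}_{2}\simeq\mathbb{Z}$ (no torsion group embeds in a torsion-free one, so the exact sequence $0\to\mathbb{Z}_{2}\to\mathbb{Z}\to E^{8,0}_{2}\to 0$ used in Section~4 is impossible); instead $\mathbb{Z}_{2}$ survives to page $7$ and must transgress isomorphically onto $E^{7,0}$, giving $H^{7}(K(\mathbb{Z},4),\mathbb{Z})\simeq\mathbb{Z}_{2}$, while $d_{4}\colon E^{4,3}_{2}=\mathbb{Z}\langle \iota a\rangle\to E^{8,0}_{2}$, $\iota a\mapsto \iota^{2}$, forces $H^{8}(K(\mathbb{Z},4),\mathbb{Z})\simeq\mathbb{Z}$. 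The Section~4 corollary is off by this degree shift, and the paper itself silently uses the corrected values in Section~5: its own tabulation there reads $E^{0,7}_{2}=H^{0}(K(\mathbb{Z},5),\mathbb{Z}_{2})\simeq\mathbb{Z}_{2}$ and $E^{0,8}_{2}\simeq\mathbb{Z}$, flatly contradicting the corollary you relied on. With the correct value $E^{0,7}_{2}\simeq\mathbb{Z}_{2}$, your fallback branch becomes the theorem: $E^{0,7}$ can die only via $d_{8}$ (its earlier targets $E^{r,8-r}_{r}$ for $2\le r\le 7$ all vanish, e.g.\ $E^{5,3}_{2}=H^{5}(K(\mathbb{Z},5),H^{3}(K(\mathbb{Z},4)))=0$ since $H^{3}(K(\mathbb{Z},4))=0$), and $E^{8,0}$ can be hit only by that same $d_{8}$ — your audit of the other sources is correct — so convergence to zero forces $d_{8}\colon\mathbb{Z}_{2}\to E^{8,0}_{2}$ to be an isomorphism. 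Hence $H^{8}(K(\mathbb{Z},5),\mathbb{Z})\simeq\mathbb{Z}_{2}$, the familiar stable value $H^{n+3}(K(\mathbb{Z},n),\mathbb{Z})\simeq\mathbb{Z}_{2}$, and the lemma as printed is false.

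For comparison, the paper's own proof of this lemma is the single sentence that $E^{8,0}_{2}$ ``should be eliminated by the differential map, and thus $E^{8,0}_{2}\simeq 0$,'' which conflates ``dies by $E_{\infty}$'' with ``is zero at $E_{2}$'': being eliminated \emph{by} an incoming differential is precisely what happens here, and given the paper's own $E^{0,7}_{2}\simeq\mathbb{Z}_{2}$ it pins $E^{8,0}_{2}$ at $\mathbb{Z}_{2}$, not $0$. So your write-up is structurally sounder than the paper's — the outgoing-differential check, the enumeration of sources, and the explicit flag on the load-bearing hypothesis are all correct — but the repair needed is to the statement, not to your argument: replace the cited value by $H^{7}(K(\mathbb{Z},4),\mathbb{Z})\simeq\mathbb{Z}_{2}$ and conclude $H^{8}(K(\mathbb{Z},5),\mathbb{Z})\simeq\mathbb{Z}_{2}$.
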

\begin{proof}   
As we know, $E^{8,0} _{2}$ should be eliminated by the differential map, and thus $E^{8,0} _{2}\simeq 0$.
\end{proof}
\begin{lem}
$E^{9,0} _{2} \simeq 0$
\end{lem}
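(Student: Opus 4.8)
The plan is to exploit the two facts that drive every computation in this section: the total space $PK(\mathbb{Z},5)$ is contractible, so $E^{p,q}_{\infty}=0$ for $(p,q)\neq(0,0)$, and the Serre spectral sequence is multiplicative, so its differentials obey the Leibniz rule. First I would locate every differential that can touch the spot $(9,0)$. No differential leaves $E^{9,0}_{r}$, since $d_{r}$ would land in $E^{9+r,1-r}_{r}$, which sits in negative fibre degree and hence vanishes. The differentials that can enter are the maps $d_{r}\colon E^{9-r,r-1}_{r}\to E^{9,0}_{r}$; scanning $r=2,\dots,9$ and using the previous lemma ($E^{1,q}_{2}=E^{2,q}_{2}=E^{3,q}_{2}=E^{4,q}_{2}=0$) together with $H^{1}=H^{2}=H^{3}=0$ for the fibre, every source group is zero except $E^{0,8}_{2}\simeq\mathbb{Z}$, which can reach $(9,0)$ only through $d_{9}$.

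Next I would show that this last survivor is already dead by page $9$. The group $E^{0,4}_{2}\simeq\mathbb{Z}$ is generated by the fundamental class $\iota_{4}$ of the fibre $K(\mathbb{Z},4)$, and transgression identifies $d_{5}\colon E^{0,4}_{5}\to E^{5,0}_{5}$ with the isomorphism $\iota_{4}\mapsto\iota_{5}$ onto $E^{5,0}_{2}\simeq\mathbb{Z}$. The generator of $E^{0,8}_{2}\simeq\mathbb{Z}$ is $\iota_{4}^{2}$, so by the Leibniz rule
$$d_{5}(\iota_{4}^{2})=2\,\iota_{4}\,d_{5}(\iota_{4})=2\,\iota_{4}\iota_{5},$$
and $\iota_{4}\iota_{5}$ generates $E^{5,4}_{2}=H^{5}(K(\mathbb{Z},5),H^{4}(K(\mathbb{Z},4)))\simeq\mathbb{Z}$. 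Thus $d_{5}\colon E^{0,8}_{5}\to E^{5,4}_{5}$ is multiplication by $2$, hence injective, so its kernel is trivial and $E^{0,8}_{6}=0$; consequently $E^{0,8}_{r}=0$ for all $r\ge 6$, and in particular $E^{0,8}_{9}=0$.

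Finally I would assemble these observations. Since the only differential able to hit $(9,0)$ is $d_{9}\colon E^{0,8}_{9}\to E^{9,0}_{9}$ and its source is now $0$, the group $E^{9,0}_{r}$ is never altered, so $E^{9,0}_{2}=E^{9,0}_{\infty}$. Contractibility forces $E^{9,0}_{\infty}=0$, whence
$$E^{9,0}_{2}=H^{9}(K(\mathbb{Z},5),\mathbb{Z})\simeq 0.$$
The step I expect to be the real obstacle is the middle one: rigorously invoking the multiplicative structure of the spectral sequence and the transgression of the fundamental class to pin $d_{5}(\iota_{4}^{2})$ down as multiplication by $2$. Once this ``multiplicable by $2$'' claim is justified --- exactly the mechanism already used for $K(\mathbb{Z},4)$ in the earlier lemmas --- the bookkeeping of sources and targets is routine.
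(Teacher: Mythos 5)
Your proof is correct, and it supplies an argument where the paper has essentially none: the paper's entire proof of this lemma is the sentence ``it is easy to see that $E^{9,0}_{2}\simeq 0$,'' with no differentials identified. Your bookkeeping is accurate: nothing leaves $(9,0)$ since all targets lie in negative fibre degree, and among the sources $E^{9-r,r-1}_{r}$ the entries $(7,1),(6,2),(5,3)$ die because $H^{1}=H^{2}=H^{3}$ of the fibre vanish, the entries $(4,4),(3,5),(2,6),(1,7)$ die by the lemma $E^{1,q}_{2}=\cdots=E^{4,q}_{2}=0$, leaving only $E^{0,8}_{2}\simeq\mathbb{Z}$, generated by $\iota_{4}^{2}$. (Note your value $H^{8}(K(\mathbb{Z},4),\mathbb{Z})\simeq\mathbb{Z}$ agrees with the table the paper itself uses in Section 5, although it contradicts the paper's own Section 4 lemma claiming $H^{8}(K(\mathbb{Z},4),\mathbb{Z})\simeq\mathbb{Z}_{2}$; yours is the correct value.) More importantly, your middle step is not a refinement but a necessity: pure position-counting cannot exclude the scenario $d_{5}=0$ on $E^{0,8}_{5}$, in which case $\mathbb{Z}$ would survive to page $9$, transgress isomorphically onto $(9,0)$, and yield $H^{9}(K(\mathbb{Z},5),\mathbb{Z})\simeq\mathbb{Z}$, while $E^{5,4}_{5}$ would instead kill $(10,0)$ alone --- an outcome perfectly consistent at the additive level. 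Only the multiplicative structure, via $d_{5}(\iota_{4}^{2})=2\iota_{4}\iota_{5}$ and hence $d_{5}\colon E^{0,8}_{5}\to E^{5,4}_{5}$ injective with $E^{0,8}_{6}=0$, rules it out. This is precisely the ``multiplicable by $2$'' mechanism the paper invokes, in garbled form, in its next lemma for $E^{10,0}_{2}$, where it asserts an injective map $\mathbb{Z}_{2}\to\mathbb{Z}$ in a short exact sequence --- impossible as written; your version, $\mathbb{Z}\xrightarrow{\times 2}\mathbb{Z}$ with $E^{5,4}_{6}\simeq\mathbb{Z}_{2}$ left to transgress onto $(10,0)$, is the correct form, so your argument also repairs the neighbouring proof. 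One small simplification: your identification of $d_{5}\colon E^{0,4}_{5}\to E^{5,0}_{5}$ as an isomorphism needs no appeal to the transgression theorem, since each of those two copies of $\mathbb{Z}$ admits no other differential and both must vanish by contractibility of $PK(\mathbb{Z},5)$.
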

\begin{proof}
For this case $E^{9,0} _{2}$, it is easy to see that $E^{9,0} _{2} \simeq 0$.
\end{proof}
\begin{lem}\label{bb}
$E^{10,0} _{2} \simeq \mathbb{Z}_{3} \oplus \mathbb{Z}_{2}$
\end{lem}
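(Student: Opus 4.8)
The plan is to read off the group $E^{10,0}_2 = H^{10}(K(\mathbb{Z},5),\mathbb{Z})$ from the requirement that the spectral sequence of the path fibration converges to the cohomology of the contractible space $PK(\mathbb{Z},5)$, so that $E^{10,0}_\infty = 0$. Since $(10,0)$ lies on the bottom row, no differential ever leaves it, and the only differentials entering it are $d_5 \colon E^{5,4}_5 \to E^{10,0}_5$ and $d_{10}\colon E^{0,9}_{10} \to E^{10,0}_{10}$; all the intermediate source groups $E^{10-r,\,r-1}_2$ with $2\le r\le 9$ and $r\ne 5$ vanish by the earlier lemmas together with the vanishing of the relevant fiber groups. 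Consequently $E^{10,0}_2$ is exhausted by the images of these two differentials, and the whole argument reduces to pinning down those two images.

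First I would handle the $3$-primary part. The group $E^{0,9}_2 = H^9(K(\mathbb{Z},4)) \simeq \mathbb{Z}_3$ survives unchanged up to page $10$, since every differential leaving it before $d_{10}$ lands in a zero group; as $E^{0,9}_\infty = 0$, the map $d_{10}\colon \mathbb{Z}_3 = E^{0,9}_{10} \to E^{10,0}_{10}$ must be injective, and because $E^{10,0}_\infty=0$ it must also be onto. Hence $E^{10,0}_{10} \simeq \mathbb{Z}_3$. Next I would handle the $2$-primary part coming through $d_5$, where the multiplicative structure of the spectral sequence is essential. The transgression $d_5\colon E^{0,4}_5 \to E^{5,0}_5$ carries the fiber fundamental class $\iota_4\in H^4(K(\mathbb{Z},4))$ to the base fundamental class $\iota_5\in H^5(K(\mathbb{Z},5))$, an isomorphism $\mathbb{Z}\to\mathbb{Z}$. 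Applying the Leibniz rule to $\iota_4^2$ gives $d_5(\iota_4^2)=2\,\iota_4\iota_5$, so $d_5\colon E^{0,8}_5=\mathbb{Z}\to E^{5,4}_5=\mathbb{Z}$ is multiplication by $2$, while $d_5(\iota_4\iota_5)=\iota_5^2$ since $d_5$ annihilates the base class $\iota_5$. Because $\iota_5$ has odd degree, $\iota_5^2=-\iota_5^2$ is $2$-torsion; combined with $E^{5,4}_\infty = 0$, which forces $d_5\colon E^{5,4}_5\to E^{10,0}_5$ to have kernel exactly $2\mathbb{Z}$, the image of this outgoing $d_5$ is precisely $\langle \iota_5^2\rangle \simeq \mathbb{Z}_2$. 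Passing to page $6$, and noting that nothing further enters $(10,0)$ before $d_{10}$, yields $E^{10,0}_{10}=E^{10,0}_6 = E^{10,0}_2/\mathbb{Z}_2$.

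Finally I would assemble the two computations. Combining $E^{10,0}_{10}\simeq \mathbb{Z}_3$ with $E^{10,0}_{10}\simeq E^{10,0}_2/\mathbb{Z}_2$ produces a short exact sequence $0 \to \mathbb{Z}_2 \to E^{10,0}_2 \to \mathbb{Z}_3 \to 0$, which splits because $\gcd(2,3)=1$; hence $E^{10,0}_2 \simeq \mathbb{Z}_2\oplus\mathbb{Z}_3$, as claimed.

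The hard part will be the middle step: one cannot identify the image of $d_5$ out of $E^{5,4}$ by counting orders, since the source $E^{5,4}_5\simeq\mathbb{Z}$ is free of rank one, so a priori its image could be any finite cyclic group. The decisive input is the ring structure, namely that $d_5(\iota_4^2)=2\,\iota_4\iota_5$ makes the incoming $d_5$ multiplication by $2$ and that $\iota_5^2$ is $2$-torsion; these two facts together force the outgoing image to be exactly $\mathbb{Z}_2$ rather than a larger cyclic group. Everything else is bookkeeping with the vanishing entries established in the preceding lemmas.
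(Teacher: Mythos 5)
Your argument is correct, and at bottom it is the same argument the paper intends: the $2$-torsion comes from the chain $E^{0,8}_{5}=\mathbb{Z}\langle\iota_4^2\rangle \xrightarrow{d_5} E^{5,4}_{5}=\mathbb{Z}\langle\iota_4\iota_5\rangle \xrightarrow{d_5} E^{10,0}_{5}$ with the incoming map multiplication by $2$, and the $3$-torsion from $d_{10}\colon E^{0,9}_{10}\simeq\mathbb{Z}_3\to E^{10,0}_{10}$. The difference is that your execution is sound where the paper's is defective: the paper's displayed ``short exact sequence'' $0\to\mathbb{Z}_2\xrightarrow{g}\mathbb{Z}\xrightarrow{f}E^{10,0}_{2}\to 0$ cannot be exact ($\mathbb{Z}_2$ admits no injection into $\mathbb{Z}$; the intended objects are $E^{0,8}_5=\mathbb{Z}$ and $E^{5,4}_5=\mathbb{Z}$, whose entries even sit on different pages of the filtration of the argument), the multiplication-by-$2$ claim is asserted with no justification, and the $\mathbb{Z}_3$ summand is simply adjoined at the end with no attention to the extension problem. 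Your proposal supplies precisely the missing steps: the Leibniz computation $d_5(\iota_4^2)=2\,\iota_4\iota_5$ resting on the transgression $d_5(\iota_4)=\iota_5$, the page-by-page verification that nothing else touches $(0,8)$, $(5,4)$, $(0,9)$ or $(10,0)$, the resulting identification $\ker(d_5\colon E^{5,4}_5\to E^{10,0}_5)=2\mathbb{Z}$ and hence image $\simeq\mathbb{Z}_2$, and the splitting of $0\to\mathbb{Z}_2\to E^{10,0}_2\to\mathbb{Z}_3\to 0$ by coprimality. Two small remarks: once the kernel is pinned to $2\mathbb{Z}$, the image is $\mathbb{Z}_2$ automatically, so the observation that $\iota_5^2$ is $2$-torsion is a consistency check rather than a needed input; and you should state explicitly that $\iota_4^2$ \emph{generates} $H^{8}(K(\mathbb{Z},4);\mathbb{Z})\simeq\mathbb{Z}$ (true, and it is what Section 5 of the paper uses, although the paper's own Section 4 contradicts it by claiming $H^{8}(K(\mathbb{Z},4);\mathbb{Z})\simeq\mathbb{Z}_2$), since if the generator were a proper divisor of $\iota_4^2$ the incoming $d_5$ could be onto and the $\mathbb{Z}_2$ would disappear.
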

\begin{proof}
From the page $2$, and following a short exact sequence 
$$0 \xrightarrow{}  \mathbb{Z}_2 \xrightarrow{\text{g}} \mathbb{Z} \xrightarrow{\text{f}} E^{10,0} _{2}  \xrightarrow{ }0$$

(Here $(f \circ g=0)$, and $g$ is a map which multiplicable by $2$), thus $E^{10,0} _{2} \simeq \mathbb{Z}_{2}$, but from the spectral sequence pages , there is a point $\mathbb{Z}_{3}$, which is should be die by differential map $d_{10}$, so
$E^{10,0} _{2} \simeq \mathbb{Z}_{3} \oplus \mathbb{Z}_{2}$
\end{proof}
\begin{lem}
$E^{11,0} _{2}= E^{12,0} _{2}=E^{13,0} _{2} = 0.$
\end{lem}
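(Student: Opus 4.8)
The plan is to reuse the bottom-edge argument that already disposed of $E^{6,0}_2$ through $E^{9,0}_2$, now combined with the value $H^{10}(K(\mathbb{Z},5))\simeq\mathbb{Z}_3\oplus\mathbb{Z}_2$ from Lemma~\ref{bb}. Contractibility of $PK(\mathbb{Z},5)$ forces $E^{p,q}_\infty=0$ for every $(p,q)\neq(0,0)$. For an entry on the bottom row every outgoing differential $d_r\colon E^{p,0}_r\to E^{p+r,\,1-r}_r$ targets a negative fibre degree and is therefore zero; hence $E^{p,0}_r$ is altered only by incoming differentials $d_r\colon E^{p-r,\,r-1}_r\to E^{p,0}_r$, and $E^{p,0}_\infty$ is $E^{p,0}_2$ modulo the images of these. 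As the limit is $0$, proving $E^{p,0}_2=0$ reduces to showing that each source $E^{p-r,\,r-1}_2=H^{p-r}\bigl(K(\mathbb{Z},5);H^{r-1}(K(\mathbb{Z},4))\bigr)$ either vanishes on $E_2$ or dies before the page on which its differential reaches the bottom edge.

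First I would clear $p=11$ and $p=12$, which are routine. Letting $r$ run from $2$ to $p$, whenever the fibre degree $r-1$ is a nonzero row the complementary base degree $p-r$ falls in a column whose cohomology $H^{p-r}(K(\mathbb{Z},5))$ is already known to vanish (below degree $11$ the only nonzero base groups sit in degrees $0$, $5$ and $10$), and otherwise the coefficient group $H^{r-1}(K(\mathbb{Z},4))$ is itself $0$. The only first-column sources that could survive are $E^{0,10}_2=H^{10}(K(\mathbb{Z},4))$ and $E^{0,11}_2=H^{11}(K(\mathbb{Z},4))$, both of which are $0$ by the corollary closing the preceding section. Hence every incoming differential is zero and $E^{11,0}_2=E^{12,0}_2=0$.

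The genuinely delicate case is $p=13$, where I expect the main obstacle. As before every antidiagonal source for $E^{13,0}$ vanishes except the first-column entry $E^{0,12}_2=H^{12}(K(\mathbb{Z},4))$, which is nonzero torsion. A priori the long differential $d_{13}\colon E^{0,12}_{13}\to E^{13,0}_{13}$ could transport this class onto a nonzero $E^{13,0}_2$, so the lemma holds precisely when the class has already been consumed at an earlier page. Its only earlier exit is the differential $d_5\colon E^{0,12}_5\to E^{5,8}_5$, with $E^{5,8}_2\simeq\mathbb{Z}_2$; I would argue that this $d_5$ is an isomorphism, so that $E^{0,12}_r=0$ for all $r\ge6$, whence $E^{0,12}_{13}=0$, the differential into the bottom row is zero, and $E^{13,0}_2=E^{13,0}_\infty=0$.

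The hard part is exactly proving that this $d_5$ does not vanish: contractibility and dimension counting alone leave open the possibility that $E^{0,12}$ instead survives to page $13$ and kills a nonzero bottom-row class, so excluding this requires extra input — either the multiplicative (cup-product) structure of the spectral sequence or the known module structure of $H^{*}(K(\mathbb{Z},4))$ — to pin down the differential on the torsion fibre class. Once the isomorphism $d_5\colon E^{0,12}_5\xrightarrow{\sim}E^{5,8}_5$ is established, the vanishing of $E^{11,0}_2$, $E^{12,0}_2$ and $E^{13,0}_2$ follows by the bottom-edge principle above.
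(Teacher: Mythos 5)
Your proposal is genuinely more substantive than the paper's own proof, which consists of the single sentence that the claim is ``easily derived'' from Lemmas \ref{aa} and \ref{bb}, supplying no differentials, no exact sequences, and no treatment of the degree-$13$ case at all. For $p=11$ and $p=12$ your bottom-edge argument is exactly what the paper can only be gesturing at, and relative to the paper's stated $E_2$ tables it is complete and correct: every source $E^{p-r,\,r-1}_2$ vanishes (either the base degree is one where $H^{p-r}(K(\mathbb{Z},5))=0$, or the coefficient group $H^{r-1}(K(\mathbb{Z},4))=0$, with $E^{0,10}_2=E^{0,11}_2=0$ by the corollary of Section 4), so $E^{p,0}_2=E^{p,0}_\infty=0$.

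Where you depart from the paper is $p=13$, and there you have correctly isolated the point the paper silently skips: $E^{0,12}_2=H^{12}(K(\mathbb{Z},4))\cong\mathbb{Z}_2$ is a live source for $d_{13}\colon E^{0,12}_{13}\to E^{13,0}_{13}$, so $E^{13,0}_2=0$ is \emph{not} a formal consequence of convergence; one needs the class to die earlier, via $d_5\colon E^{0,12}_5\to E^{5,8}_5\cong\mathbb{Z}_2$. You leave that isomorphism unproven and you are right that bookkeeping alone cannot decide it: the alternative scenario ($d_5=0$, with $E^{0,12}$ transgressing isomorphically onto a nonzero $E^{13,0}_2\cong\mathbb{Z}_2$ and $E^{5,8}$ escaping through $d_9$ into $E^{14,0}$) is equally consistent with $E_\infty=0$ in total degrees $\leq 13$, so extra input (the multiplicative structure, e.g.\ exhibiting the generator of $E^{5,8}$ as a product of the base class $x_5$ with a transgressive fibre class) is indispensable. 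This is an acknowledged gap in your proposal, but it is a gap the paper shares without acknowledging; on this point your write-up is more rigorous than the published argument. One further caution: your blanket claim that all other antidiagonal sources for $E^{13,0}$ vanish relies on the paper's entry $E^{5,7}_2=H^5(K(\mathbb{Z},5);\mathbb{Z}_2)\simeq 0$, which is inconsistent with universal coefficients, since $H_5(K(\mathbb{Z},5))\cong\mathbb{Z}$ gives $H^5(K(\mathbb{Z},5);\mathbb{Z}_2)\cong\mathbb{Z}_2$; that creates a second uncontrolled differential $d_8\colon E^{5,7}_8\to E^{13,0}_8$. This is a defect of the paper's tables rather than of your reasoning given those tables, but it means the degree-$13$ vanishing is even further from ``easily derived'' than your proposal already indicates.
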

\begin{proof}
Similarly from Lemma \ref{aa} and \ref{bb}, this is easily derived. 
\end{proof}
\bibliographystyle{amsplain}

\end{flushleft}
\end{document}